\numberwithin{equation}{section}
\begin{document}

\title[Heat-trace for edge Laplacians with algebraic boundary conditions]
{Heat-trace asymptotics for edge Laplacians with algebraic boundary conditions}

\author{Boris Vertman}
\address{Mathematisches Institut,
Universit\"at Bonn,
53115 Bonn,
Germany}
\email{vertman@math.uni-bonn.de}
\urladdr{www.math.uni-bonn.de/people/vertman}

\subjclass[2010]{58J52; 34B24}
\date{This document compiled on: \today.}

\begin{abstract}
We consider the Hodge Laplace operator on manifolds with incomplete edge singularities 
and an intricate elliptic boundary value theory. We single out the class of algebraic self-adjoint extensions
for the Hodge Laplacian. Our microlocal heat kernel construction for algebraic boundary conditions is 
guided by the method of signaling solutions by Mooers, though crucial 
arguments in the conical case obviously do not carry over to the setup of edges. 
We establish the heat kernel asymptotics for the algebraic
extensions of the Hodge operator on edges, and elaborate on the exotic phenomena in the 
heat trace asymptotics which appear in the case of a non-Friedrichs extension.
\end{abstract}

\maketitle
\tableofcontents

\section{Introduction}

Unusual new phenomena in the heat trace asymptotics in the setup of singular spaces 
have attracted a considerable interest since the explicit observations by 
Falomir, Muschietti, Pisani and Seeley in \cite{FMPS:UPZ}
as well as by Kirsten, Loya and Park in \cite{KLP:EEA}, \cite{KLP:UPR} for certain explicit regular-singular 
operators on a line segment.
\medskip

The general problem of resolvent trace asymptotics for closed extensions of general elliptic 
cone operators with sectors of minimal growth has been studied by Gil, Krainer and Mendoza in \cite{GKM:TEF, GKM1},
who gave a detailed geometric and analytic explanation of the unusual phenomena in the resolvent trace asymptotics,
with corresponding results on the heat trace expansion, if the closed extension is sectorial, and
on the structure of its zeta-function, if the closed extension is positive. \medskip

For the Hodge-Laplace operator on a manifold with an isolated conical singularity, 
new unusual phenomena have already been hinted at by Mooers in \cite{Moo:THK}, \cite{Moo:HKA}. 
However, Mooers did not elaborate in detail on the actual heat trace asymptotics, 
but rather observed certain unexpected non-polyhomogeneity properties of the heat kernel.
The present work closes this gap and derives of a full heat trace asymptotics 
for certain self-adjoint extensions of the Hodge-Laplacian in the general setup of incomplete edge singularities.
Presence of a higher dimensional edge singularity leads to various conceptually new analytical aspects which 
we address.
\medskip
 
A complete characterization of self-adjoint extensions for the Laplacian
requires a full scale elliptic theory of edge degenerate operators, see \cite{Maz:ETD} and \cite{Sch:PDO}. 
However, in this paper we consider the class of algebraic boundary conditions, 
which define self-adjoint realizations of the Hodge-Laplacians on edge manifolds,
as already employed by the author jointly with Bahuaud and Dryden \cite{BDV:THE} in context 
of non-linear parabolic equations on edge manifolds. 
\medskip

In this paper we proceed with a construction of the heat kernel for these algebraic boundary conditions, 
guided by the method of signaling solutions by Mooers \cite{Moo:HKA} in case of isolated conical singularities. 
The setup of incomplete edge singularities requires different analytic arguments at various crucial points. 
Hereby, we present (simpler) alternative arguments to \cite{Moo:HKA} at various steps in the construction.
\medskip

In this general geometric setup we recover the unusual new phenomena 
in the heat trace asymptotics, observed in \cite{FMPS:UPZ}, \cite{KLP:EEA}, \cite{KLP:UPR}
and for general elliptic cone operators with sectorial closed extensions in \cite{GKM:TEF, GKM1}. It should be noted, 
however, that in the first three papers the analysis has been 
performed independent of the earlier work by Mooers \cite{Moo:HKA}, and 
relies on a very specific exact operator structure and Bessel analysis.
\medskip

This paper is organized as follows. We first review the basic geometry of 
incomplete edge spaces in \S \ref{geometry}. We then classify certain algebraic
self-adjoint realizations for the Hodge Laplacian in \S \ref{section-algebraic}
and recall from \cite{MazVer:ATM} the asymptotic properties of the heat kernel for the Friedrichs
self-adjoint extension in \S \ref{s-asymptotics}. We study the signaling problem in \S \ref{model-signal-section}
and \S \ref{signal-section}. The solution to the signaling problem is the central ingredient 
in the construction of the heat kernel for algebraic self-adjoint 
boundary conditions, which is explained in \S \ref{heat-section}
and is basically a revision of \cite{Moo:HKA}. Finally, in \S 
\ref{trace-section} we derive the heat trace expansion 
directly from the heat kernel structure.

\section{Hodge Laplacian on incomplete edge spaces}\label{geometry}

We consider a compact stratified space $\overline{M}$
which is assumed to be comprised of a single top-dimensional open stratum $M^m$ and 
a single lower dimensional stratum $B^b$. By the stratification hypothesis, $B$ is a closed manifold.
Moreover, the stratification hypothesis yields an open neighbourhood $U\subset \overline{M}$ of $B$
together with a radial function $x:U \to [0,\infty)$, such that $U\cap M$ is the total space of a smooth 
fibre bundle over $B$ with an open truncated cone $\mathscr{C}(F)=(0,1)\times F$ over a compact smooth manifold $F^f$
as the trivial fibre. The restriction of the radial function $x$ to each fibre defines the radial function of that cone.
\medskip

Resolution of the stratum $B$ in $\overline{M}$ defines a compact manifold $\widetilde{M}$ with boundary $\partial M$, 
where $\partial M$ is the total space of a fibration $\phi: \partial M \to B$ with the fibre $F$. The resolution 
process is described in detail for instance in \cite{Maz:ETD}. The neighborhood $U$ lifts to a collar neighborhood 
$\U \subset \widetilde{M}$ of the boundary, which is a smooth fibration of cylinders $[0,1)\times F$ over $B$ with the radial function $x$.
Clearly $M=\widetilde{M} \backslash \partial M$.

\begin{defn}\label{d-edge}
A Riemannian manifold with an edge singularity is the open stratum $M$ together with 
a Riemannian metric $g$ such that $g=g_0+h$ over $\mathscr{U}$, where $g_0$ attains the form 
$$g_0\restriction \U\backslash \partial M=dx^2+x^2 g^F+\phi^*g^B,$$
where $g^B$ is a Riemannian metric on the closed manifold $B$, 
$g^F$ is a symmetric 2-tensor on the fibration $\partial M$ restricting to a 
fixed Riemannian metric on each fibre $F$, $|h|_{g_0}$ is smooth on $\U$ and $|h|_{g_0}=O(x^2)$ as $x\to 0$. 
\end{defn}

Similar to other discussions in the singular edge setup, see 
\cite{Alb:RIT}, \cite{BDV:THE},\cite{BahVer:YFO} and \cite{MazVer:ATM}, we consider a slightly restricted 
class of edge metrics and require $\phi: (\partial M, g^F + \phi^*g^B) \to (B, g^B)$ to be a Riemannian submersion in the following sense. 
If $p\in \partial M$, then the tangent bundle $T_p\partial M$ splits into vertical and horizontal subspaces as 
$T^V_p \partial M \oplus T^H_p \partial M$, where $T^V_p\partial M$ is the tangent space to the fibre of 
$\phi$ through $p$ and $T^H_p \partial M$ is the annihilator of the subbundle 
$T^V_p\partial M \lrcorner g^F \subset T^*\partial M$ ($\lrcorner$ meaning contraction).  
The requirement for $\phi$ to be a Riemannian submersion is the condition that the restriction of the 
tensor $g^F$ to $T^H_p \partial M$ vanishes. 

\begin{defn}\label{def-feasible}
Let $(M,g)$ be a Riemannian manifold with an edge metric. This metric $g=g_0+h$ is said to be admissible if 
 $\phi: (\partial M, g^F + \phi^*g^B) \to (B, g^B)$ is a Riemannian submersion.
\end{defn} 

In order to explain the reason behind the admissibility assumption, consider local coordinates 
$y=(y_1,...,y_{b}),b=\dim B$ on $B$ lifted to $\partial M$ and then extended inwards to $\U$. 
Let $z=(z_1,...,z_f),f=\dim F$ restrict to local coordinates on $F$ along each fibre of the boundary. 
Then $(x,y,z)$ define local coordinates on $\U \cap M$. \medskip

Consider the Hodge Laplacian $\Delta_p$ on $(M,g)$
acting on $p$-forms, and for any $y_0$ in the coordinate patch on $B$ the normal operator 
$N(x^2\Delta_p)_{y_0}$, defined as the limit of $x^2\Delta_p$ on $p$-forms with respect to the local family of dilatations 
$(x,y,z) \to (\lambda x, \lambda (y-y_0), z)$ as $\lambda \to \infty$. Under the first admissibility assumption, 
$N(x^2\Delta_p)_{y_0}$ is naturally identified with $s^2$ times the Hodge Laplacian on $p$-forms on the 
model edge $\R^+_s \times F \times \R^b$ with incomplete edge metric $g_{\textup{ie}} = ds^2 + s^2 g^F + g^B_{y_0}$,
where we identified $T_{y_0}B=\R^b$ and denote the restriction of $g^F$ to the fibres $F$ by $g^F$ again.  
\medskip

We mention that the assumption of $g^F$ to restrict to a fixed Riemannian metric on fibres $F$ is 
only used in the Friedrichs mollifier argument in Proposition \ref{S-symmetric}. The actual analysis 
of the heat kernel needs isospectrality of fibres to ensure polyhomogeneity of the heat kernel when lifted 
to the corresponding blowup space. More precisely, here we only need that the eigenvalues of the Laplacians
on fibres are constant in a fixed range $[0,1]$. \medskip
 
The remainder of the section is devoted to the explicit structure of the Hodge Laplacian, more precisely of its normal operator, 
basically drawn from \cite[\S 2.3]{MazVer:ATM}. Consider a hypersurface $S_a=\{s=a\}$ of the model edge. Its tangent bundle 
$TS_a \equiv T(\R^b \times F)$ splits into the sum of a `vertical' and `horizontal' subspaces. The first subspace is
tangent to $F$ and the latter is tangent to the Euclidean factor $\R^b$. This splitting is orthogonal, and
we obtain a bigrading  
\begin{align}\label{bigrading}
\Lambda^p(TS_a) = \bigoplus_{j + l = p} \Lambda^j(\R^b) 
\otimes \Lambda^l (TF) =: \bigoplus_{j+l = p} \Lambda^{j,l}(S). 
\end{align}
We denote by $\Omega^{j,l}(S)$ the space of sections of the corresponding summand in this bundle decomposition.  
We want to make the normal operator $N(x^2\Delta_p)_{y_0}$ explicit with 
respect to a rescaling of the form bundles, employed also in \cite{BruSee:RES}.
More precisely, for each $j,l$ with $j + l = p$, we define
\begin{align*}
& \phi_{j,l}: C^{\infty}_0(\R^+, \Omega^{j,l-1}(S) \oplus \Omega^{j, l}(S)) 
\to \Omega^p_0(\R^b \times \mathscr{C}(F)), \\
& \qquad (\eta, \mu) \longmapsto s^{l-1-f/2}\eta \wedge ds + s^{l-f/2}\mu, 
\end{align*}
where the lower index indicates the compact support of functions and differential forms, 
away from $\{x=0\}$. We denote by $\Phi_p$ the sum of these maps over all $j+l = p$. 
Let $g_{\textup{ie}} = ds^2 + s^2 g^F + g^B_{y_0}$ be a Riemannian exact edge metric 
on $\R^b \times \mathscr{C}(F)$. Then exactly as in case of 
isolated conical singularities, we obtain an isometric transformation
$$
\Phi_p\!:\!L^2([0,1], \! L^2(\bigoplus_{j+l=p} \!\Omega^{j,l-1}(S) 
\oplus \Omega^{j,l}(S), g^F + g^B_{y_0} ), ds) \!
\to L^2(\Omega^p(\R^b \times \mathscr{C}(F)), g_{\textup{ie}}).
$$
Under this transformation we find for the normal operator
\begin{align}\label{laplace}
\Phi_p^{-1}\left[s^{-2}N(x^2\Delta_p)_{y_0}\right]\Phi_p=\left(-\frac{\partial^2}{\partial s^2}+\frac{1}{s^2}(A_p-1/4)\right) + \Delta_{\R^b,y_0}, 
\end{align}
where $\Delta_{\R^b,y_0}$ is obtained from the Hodge Laplacian on $B$ on $p$-forms by freezing coefficients at $y_0\in B$, and 
$A_p$ is the nonnegative self-adjoint operator, given on $\Omega^{l-1}(F) \oplus \Omega^{l}(F)$ by 
\begin{align}\label{a}
A_p=\left(\begin{array}{cc}\Delta_{l-1,F} + (l-(f +3)/2)^2 & 2(-1)^{l}\, \delta_{l,F}\\ 2(-1)^{l}\, d_{l-1,F}& \Delta_{l,F}+ 
(l-(f +1)/2)^2\end{array}\right).
\end{align}
One motivation for this transformation is a particularly simple form of the indicial roots.
Writing the eigenvalues of $A_p$ as $\nu_j^2, \nu_j \geq 0$, with corresponding eigenform $\phi_j$, the
corresponding indicial roots of \eqref{laplace} are given by
\begin{equation}
\gamma_j^{+} = \nu_j + \frac12 \ , \quad \gamma_j^{-} = - \nu_j + \frac12 . 
\label{indroots}
\end{equation}

A similar rescaling $\Phi_p$ by powers of the defining function $x$ 
makes sense in each local coordinate chart near the singular neighborhood $\partial M$. 
Rescalings with respect to different local coordinates are equivalent up to a diffeomorphism. 
Under conjugation by $\Phi_p$, the Hodge Laplacian on $p$-forms is a perturbation of 
\eqref{laplace} with higher order terms coming from the curvature of the Riemannian submersion 
$\phi: \partial M \to B$ and the second fundamental forms of the fibres $F$. 
We denote the rescaled operator by $\Delta_p$ again, if there is no danger of confusion.

\section{Algebraic boundary conditions on incomplete edges}\label{section-algebraic}

In this section we consider boundary conditions at the edge which define
self-adjoint extensions of the Hodge Laplacian of an incomplete edge space $(M,g)$. 
This is basically a short exposition of the analogous discussion in \cite{BDV:THE}.
For spaces with isolated conic singularities, this was first
accomplished by Cheeger \cite{Che:SGS}.  Further studies in the conic setting 
appear in \cite{Les:OFT}; see also \cite{Moo:HKA} and \cite{KLP:EEA}. 

Let us first review the significantly simpler situation of an isolated conical singularity, i.e.
an incomplete admissible edge space $(M,g)$ with $\dim B=0$. Set $\Delta = \oplus_p \Delta_p$. 
The normal operator of $\Delta$ is again of the same structure as \eqref{laplace}
with the rescaling $\Phi=\oplus_p \Phi_p$ and the tangential operator $A=\oplus_p A_p$. Any $u \in \mathscr{D}_{\max}(\Delta)$ 
in the maximal domain\footnote{For any differential operator $P$ acting on $C^\infty_0(M,E)$ with 
values in some Hermitian vector bundle $(E,h)$, $\mathscr{D}_{\max}(P)$ is defined as the space 
of $u\in L^2(M,E;g,h)$ such that $Pu\in L^2(M,E;g,h)$, where $Pu\in L^2$ is understood in the distributional sense.
Another natural domain is the minimal domain $\mathscr{D}_{\min}(P)$ defined as the graph closure of $P$ acting on $C^\infty_0(M,E)$.} 
of $\Delta$, admits an asymptotic expansion as $x\to 0$
\begin{equation}\label{expansion-w}
\Phi^{-1}u \, \sim \, \sum_{j=1}^q \left( c_j^+[u] \psi_j^+(x,z) + c^-_j[u] \psi_j^-(x,z) \right) + 
\tilde{u}, \ \tilde{u} \in \mathscr{D}_{\min}(\Delta),
\end{equation}
where $\{\nu^2_j\}_{j=1}^q$ enumerates eigenvalues of $A$ inside the intervall $[0,1)$, in ascending order, 
$\psi_j^{\pm} \sim x^{\gamma_j^{\pm}} \phi_j$ as $x\to 0$, with the exception 
of $\psi^-_j \sim \sqrt{x}\log(x) \phi_j$ if $\nu_j=0$, where 
$\phi_j$ denotes the normalized $\nu_j^2$-eigenform of the tangential operator $A$.
The coefficients  $c_j^{\pm}[u]$ depend on $u$ only. \medskip

There is a full characterization of self-adjoint extensions of $\Delta$ by specifying algebraic 
relations between the coefficients $c_j^{\pm}$, see e.g. (\cite{Moo:THK}, Section 7). 
For this we consider the $2q$-dimensional vector space $\Lambda_q$ spanned by solutions $\{\psi_j^{\pm}\}_{j=1}^q$ and introduce a bilinear form $\w_q$ on $\Lambda_q$ by 
\begin{equation}
\begin{split}
&\w_q(\psi_j^{+},\psi_j^{-}) = - \w_q(\psi_j^{-},\psi_j^{+}) = \left \{
\begin{split} &2\nu_j, \ \nu_j >0, \\ &1, \ \ \ \, \nu_j =0, \end{split} \right. \\
&\w_q(\psi_j^{+},\psi_j^{+}) = \w_q(\psi_j^{-},\psi_j^{-}) = \w_q(\psi_i^{\pm},\psi_j^{\pm}) = 0, i \neq j.
\end{split}
\end{equation}
Subspaces of $\Lambda_q$ where the bilinear form $\omega_q$ vanishes, may be represented 
as follows. There is a \emph{Lagrangian matrix} $q \times q$ matrix $\Gamma = (\Gamma_{ij})$ with 
diagonal entries $\Gamma_{jj}=b_{jj} \psi_j^{-} + \theta_{jj} \psi_j^{+}$
and off-diagonal entries $\Gamma_{ij}= \theta_{ij} \psi_j^{+}$; the coefficients  
$b_{ij}, \theta_{ij}\in \R$ are such that either $b_{ii}=1$ or $b_{ii}=0$, where in the
latter case we require $\theta_{ii}=1$ and $\theta_{ij}=0$ for $i\neq j$. 
If $b_{jj}=0$ whenever $\nu_j=0$, we call $\Gamma$ \emph{non-logarithmic}, 
as in this case there are no so-called ``unusual'' logarithmic terms in the expansion of the heat trace (cf. \cite{KLP:EEA}). Such a matrix defines a self-adjoint domain for $\Delta$ as follows.

\begin{defn}\label{defn:DG}
The algebraic domain of the Hodge Laplacian $\Delta$ associated to a Lagrangian matrix  $\Gamma = (\Gamma_{ij})$ is defined by 
\begin{align*}
\mathscr{D}_{\Gamma}(\Delta) := \{u\in \mathscr{D}_{\max}(\Delta) \mid 
\forall \ i=1,\dots,q: \, \sum_{j=1}^q \w_q( c_j^+[u] \psi_j^+ + c^-_j[u] \psi_j^- , \Gamma_{ij})=0\}.
\end{align*}
\end{defn}

These algebraically defined boundary conditions classify all self-adjoint extensions of the Hodge
Laplacian on cones,  cf. (\cite{Moo:THK}, Section 7) and also \cite{KLP:EEA}. \medskip

Passing to the general setting of 
incomplete feasible edge spaces with $\dim B\neq 0$ presents various crucial difficulties. 
The asymptotic expansion \eqref{expansion-w} holds only in a weak sense, i.e. only when $u\in \mathscr{D}_{\max}(\Delta)$
is paired with a smooth test function over $B$. In other words, the coefficients $c^\pm_j[u]$ are of negative Sobolev regularity
in $y\in B$. Moreover, the expansion is local in the sense that there may be no global choice of elements $\psi^\pm_j$ over the edge
manifold $B$. The error term $\tilde{u}$ need not be an element of the minimal domain $\mathscr{D}_{\min}(\Delta)$
any longer, but is only a higher order term in the asymptotics, of certain Sobolev regularity. Finally, analytic arguments 
do not generally localize over the edge $B$, since $\mathscr{D}_{\max}(\Delta)$ and $\mathscr{D}_{\min}(\Delta)$
need not be closed under multiplication with smooth cutoff functions. \medskip

The fundamental tool in dealing with the listed restrictions is a mollification argument, which 
does not apply to the second order degenerate operator $\Delta$
in any obvious way unless $B$ is either zero-dimensional or Euclidean. However, the situation changes dramatically, 
when we consider first order operators. Return back to the general setting of incomplete 
admissible edge spaces and note $\Delta = D^t D$, where $D=d+\delta$ is 
the Gauss Bonnet operator of $(M,g)$. By \cite[Lemma 2.4]{MazVer:ATM}, any $u \in \mathscr{D}_{\max}(D)$ 
admits a weak asymptotic expansion as $x\to 0$
\begin{equation}\label{expansion-u}
\Phi^{-1}u \, \sim \, \sum_{j} c_j[u] \, \psi_j(x,z;y) + \tilde{u}, 
\end{equation}
where we sum over $\nu_j \neq 0$, $\psi_j \sim x^{-\nu_j+1/2} \phi_j$ as $x\to 0$, $\tilde{u}$ is a higher order remainder, such that $\tilde{u}\in \mathscr{D}_{\min}(D)$ if all coefficients $c_j[u]\equiv 0$. As before, 
the coefficients  $c_j[u]$ are of negative Sobolev regularity in $y$, in other words the 
asymptotic expansion holds only after pairing $\Phi^{-1}u$ with a smooth test function in $C^\infty(B)$. 
The Lagrange identity for $D$ acting on 
$\mathscr{D}_{\max}(D) \cap \mathscr{A}_{\textup{phg}}$ is worked out in \cite[(2.9)]{MazVer:ATM},
and similar to Definition \ref{defn:DG} we may define algebraic domains for $D$ by 
specifying linear relations $S$ between the coefficients $c_j[u]$. Each such choice $S$ gives a self-adjoint domain 
$\mathscr{D}_{S}(D)$ in case of isolated cones, and using Friedrichs mollifiers \cite{BDV:THE} proves 
its self-adjointness in case of incomplete edges. We provide the proof here for reader's convenience.

\begin{prop}\label{S-symmetric}
$\mathscr{D}_{S}(D)$ defines a self-adjoint extension of $D$.
\end{prop}

\begin{proof}
The first part of the proof is to show that $D$ is indeed symmetric on $\mathscr{D}_{S}(D)$, 
a statement that cannot be deduced as in the conical case directly, since the 
expansion \eqref{expansion-u} holds only in the weak sense.

Let $w\in \mathscr{D}_S(D)$ and $\phi$ be a cut-off function supported in a local coordinate neighborhood $(x,y)$,
such that $\phi(x,y)=\phi_1(x) \phi_2(y)$, where $\phi_1\in C^\infty_0[0,1)$ has compact support in $[0,1)$ and is identically one near $x=0$, $\phi_2$ is a smooth cutoff function supported around some $y_0\in B$. 
Under that choice we still have $u:= w \cdot \phi \in \mathscr{D}_S(D)$. 
For each coefficient $u_I$ of the form-valued $u$ and a test function $\psi \in C^\infty(B)$, supported in a local coordinate 
neighborhood, we write in local coordinates
$$
(u_I * \psi)(x,y,z)= \int_{\R^b} u_I(x,y-\wy, z) \psi(\wy) d\wy.
$$
The convolutions $u_I * \psi$ can be assembled locally back into a differential form, 
which we denote by $u * \psi$. Since the relations $S$ between the coefficients $c_j[u], u \in \mathscr{D}(D)$
are defined by linear equations, we still have $u * \psi \in \mathscr{D}_S(D)$. Moreover, due to pairing with $\psi \in C^\infty(B)$, the coefficients 
$c_j[u*\psi]$ are now smooth in $y$ and hence $u*\psi \in \mathscr{D}_S(D) \cap \mathscr{A}_{\textup{phg}}$. 
We now specify $\psi$ to be a cutoff function, compactly supported around the coordinate origin $0\in \R^b$ in local coordinates,
with $\widehat{\psi}(0)=1$, where $\widehat{\psi}(\zeta)$ denotes the Fourier transform of $\psi$. Define a sequence $\psi_\epsilon (y) := \epsilon^{-b}\psi(y/\epsilon)$, such that
$$
\widehat{\psi_\epsilon}(\zeta) = \widehat{\psi}(\epsilon \zeta) \rightarrow \widehat{\psi}(0) =1,
\ \textup{as} \ \epsilon \to 0.
$$
Set $u_\epsilon = u * \psi_\epsilon$ and $u_{\epsilon, I}= u_I * \psi_\epsilon$. Note that the
Fourier transform of each $u_I$ in $\wy$, denoted by $\widehat{u}_I$, is $L^2(dx\, d\zeta \, dz)$-integrable. 
Since $(\widehat{\psi}(\epsilon \, \cdot) - 1)$
is bounded uniformly in $\epsilon$ we obtain by dominated convergence
\begin{align}\label{molli}
\| u_{\epsilon, I} - u_{I}\|_{L^2} = \| \widehat{u}_I \left(\widehat{\psi}(\epsilon \, \cdot) - 1\right)\|_{L^2} 
 \rightarrow 0, \ \textup{as} \ \epsilon \to 0.
\end{align}
This proves $u_\epsilon \to u$ in $L^2(dxdydz)$ as $\epsilon \to 0$. Moreover, we write (convolutions understood componentwise)
\begin{align*}
D (u * \psi_\epsilon) &= (D u) * \psi_\epsilon + 
\sum_{k\in \mathscr{K}}  \int_{\R^b}  \big(a_k(y) - a_k(y-\wy)\big)D_k u(y-\wy) \psi_\epsilon (\wy) d\wy\\
&=: (D u) * \psi_\epsilon + 
\sum_{k\in \mathscr{K}}  \int_{\R^b}  \delta_{a_k}(y,\wy) D_k u(y-\wy) \psi_\epsilon (\wy) d\wy,
\end{align*}
where $a_k D_k, k \in \mathscr{K},$ is the collection of summands in $D$ with $y$-dependent coefficients,
where each $a_k\in C^\infty(\overline{M})$ and by admissibility assumptions, $D_k$ is either a first order combination of edge derivatives 
$\mathcal{V}_e = C^\infty - \textup{span} \{x\partial_x, x\partial_{y},\partial_{z}\}$,
or $D_k  \in C^\infty - \textup{span} \{\partial_y\}$.
We will show that the second sum converges to zero in $L^2(dxdydz)$ and hence by exactly the same argument as above, 
$D u_\epsilon \to D u$ in $L^2$ as $\epsilon \to 0$. This will prove that any 
locally supported $u\in \mathscr{D}_S(D)$ can indeed by approximated by
a sequence $(u_\epsilon) \subset \mathscr{D}_S(D) \cap \mathscr{A}_{\textup{phg}}$ in the graph norm.
\medskip

If $D_k$ is a first order combination of edge derivatives, by elliptic edge theory, 
\cite{Maz:ETD}, $D_ku_I \in L^2$ and hence we may apply same argument as in \eqref{molli}. The argument is
more intricate in case $D_k  \in C^\infty - \textup{span} \{\partial_y\}$. For some $D_k =\partial_{y_j}$ we compute using 
integration by parts (omit the lower index $I$)
\begin{align*}
&\int_{\R^b} \partial_{y_j} u(y-\wy) \delta_{a_k}(y,\wy) \psi_\epsilon (\wy) \, d\wy 
= \int_{\R^b}  u(y-\wy) \,  \partial_{\wy_j}\left(\delta_{a_k}(y,\wy) \psi_\epsilon (\wy)\right) d\wy =\\
&\int_{\R^b}  u(y-\wy) \,  \partial_{y_j} a_k(y-\wy) \, \psi_\epsilon (\wy) \, d\wy +
\int_{\R^b}  u(y-\wy) \delta_{a_k}(y,\wy) \partial_{\wy_j} \psi_\epsilon (\wy) \, d\wy =:I_1 + I_2.
\end{align*}

Both $u, \partial_{y_j} a_k \cdot u \in L^2$ and repetition of the  
argument in \eqref{molli} implies that $I_1$ converges to $\partial_{y_j} a_k \cdot u$ in $L^2$ as $\epsilon \to 0$. For $I_2$
we expand $a_k(y-\wy)$ in Taylor series around $y$
\begin{align*}
I_2 &= \sum_{|\A|=1}^{N-1} \frac{(-1)^{|\A|+1}}{\A !} \, \partial_y^\A a_k(y) \int_{\R^b}  u(y-\wy) \, \wy^\A \,
\partial_{\wy_j} \psi_\epsilon (\wy) \, d\wy \\
&+ \sum_{|\A|=N} \frac{(-1)^{N+1}}{\A !} \, \int_{\R^b}  \partial_y^\A a_k(y+\theta_N(\wy-y)) \, u(y-\wy) \, \wy^\A \,
\partial_{\wy_j} \psi_\epsilon (\wy) \, d\wy,
\end{align*}
for some $\theta_N\in (0,1)$\footnote{We assume that the local coordinate neighborhood in $\R^b$ around $y$
is convex.}.
The Fourier transform of $\wy^\A \partial_{\wy_j} \psi_\epsilon (\wy)$ equals 
$\epsilon^{|\A|-1} \widehat{\wy^{\A} \partial_{\wy_j}\psi}(\epsilon \zeta)$ and hence converges pointwise to zero for $|\A|\geq 2$ as $\epsilon \to 0$. 
Similarly as before in \eqref{molli}, the corresponding summands 
converge to zero in $L^2$. For $|\A|=1$ we denote with $Y_i$ 
the multiplication operator by $\wy_i$ and obtain after integrating by parts
\begin{align*}
\lim_{\epsilon \to 0}\widehat{Y_i \partial_{y_j} \psi_\epsilon}(\zeta) = 
\lim_{\epsilon \to 0}\widehat{Y_i \partial_{y_j} \psi}(\epsilon \zeta) =  \int_{\R^b} \wy_i \, \partial_{\wy_j} \psi (\wy) \, d\wy 
= - \delta_{ij} \int_{\R^b} \psi (\wy) \, d\wy = -\delta_{ij}.
\end{align*}

Similar argument as in \eqref{molli} implies now that $I_2$ converges to $(-\partial_{y_j} a_k \cdot u)$ in $L^2$ as $\epsilon \to 0$
and hence the sum $I_1 +I_2$ converges to zero. Thus any 
locally supported $u\in \mathscr{D}_S(D)$ may be approximated in the graph norm by
a sequence $(u_\epsilon) \subset \mathscr{D}_S(D) \cap \mathscr{A}_{\textup{phg}}$.
While symmetry of $\mathscr{D}_{S}(D) \cap \mathscr{A}_{\textup{phg}}$ follows by the same argument as in the conical case,
we obtain symmetry of $D$ on $\mathscr{D}_{S}(D)$ using a partition of unity $(\phi_\A)$ subordinate to coordinate charts on $\widetilde{M}$.
For any $f,g \in \mathscr{D}_{S}(D)$ we can write
\begin{align*}
\langle D f, g \rangle_{L^2} - \langle f, D g \rangle_{L^2} = 
\sum_{\A} \left( \langle D f, g \cdot \phi_\A \rangle_{L^2} - \langle f, D ( g\cdot \phi_\A) \rangle_{L^2} \right) = 0, 
\end{align*}
where the last equality follows by approximating each $g \cdot \phi_\A$  in the graph norm by a sequence in
$\mathscr{D}_{S}(D) \cap \mathscr{A}_{\textup{phg}}$, and using symmetry of $D$
on $\mathscr{D}_{S}(D) \cap \mathscr{A}_{\textup{phg}}$.

Self-adjointness on $\mathscr{D}_{S}(D)$ now follows once we establish the following relation
$$\mathscr{D}(D_{S}^*) := \{f\in \mathscr{D}_{\max}(D) \mid \forall g \in \mathscr{D}_{S}(D): 
\langle D f, g \rangle_{L^2} = \langle f, D g \rangle_{L^2} \} \subseteq \mathscr{D}_{S}(D).$$

Consider any $f \in \mathscr{D}(D_{S}^*)$ as well as a locally 
supported $g\in \mathscr{D}_{S}(D) \cap \mathscr{A}_{\textup{phg}}$, associated to an arbitrary set of smooth coefficients $c_j[g]$ 
with compact support in $\R^b$, satisfying the algebraic relations $S$. Regularity of coefficients in the asymptotic expansion of $f$ 
is not an issue any longer due to pairing with polyhomogeneous $g$ and hence, exactly as in the conical case
we deduce from $\langle D f, g \rangle_{L^2} = \langle f, D g \rangle_{L^2}$ 
that the coefficients $c_j[f]$ in the weak expansion of $f$ in that coordinate neighbourhood must satisfy the algebraic conditions 
of $\mathscr{D}_{S}(D)$. This proves $f \in \mathscr{D}_{S}(D)$.
\end{proof}

If $B$ is either zero-dimensional or Euclidean, then a similar mollification argument yields self-adjointness 
of $\mathscr{D}_{\Gamma}(\Delta)$ for the Hodge Laplacian directly, 
without the need to invoke the first order Gau\ss \, Bonnet operator. Thus below we consider only algebraic domains 
$\mathscr{D}_{\Gamma}(\Delta)$ which arise as self-adjoint realizations $D_S^*D_S$\footnote{$D_S$ denotes
the self-adjoint realization of the Gau\ss\, Bonnet operator with domain $\mathscr{D}_S(D)$.}, 
or assume that $B$ is either zero-dimensional or Euclidean. If we restrict ourselves to those domains 
$\mathscr{D}_{\Gamma}(\Delta)$ that are compatible with the decomposition 
$L^2\Omega^*=\oplus_p L^2\Omega^p$, we may define $\mathscr{D}_{\Gamma}(\Delta_p)$
in each degree $p$. We remark that the domain $\mathscr{D}(D_S^*D_S)=\mathscr{D}_{\Gamma}(\Delta)$ 
defines a non-logarithmic Lagrangian $\Gamma$. \medskip

Classification of all self-adjoint extensions of the Hodge Laplacian on incomplete edges 
is beyond the scope of the present discussion, as it rests on a detailed understanding of the 
elliptic theory of edge degenerate operators. 

\section{Asymptotics of the heat kernel on edge manifolds}\label{s-asymptotics}

In a joint work with Mazzeo (\cite{MazVer:ATM}, Proposition 2.5) we have identified the Friedrichs
extension of $\Delta$ as the algebraic self-adjoint extension associated to $\Gamma=\textup{diag}(\psi^+_1,..,\psi^+_q)$.
We denote the Friedrichs extension of the Hodge Laplacian by $\Delta^{\mathscr{F}}$ and explain here the 
polyhomogeneity properties of its heat kernel $\HF$ near the edge.
\medskip

We begin by recalling the definition of conormal and 
polyhomogeneous distributions on a manifold with corners, see \cite{Mel:TAP} and \cite{Mel:COC}.
For this we consider a manifold $\mathfrak{W}$ with corners and embedded boundary faces 
$\{(H_i,\rho_i)\}_{i=1}^N$ where $\{\rho_i\}$ denote the corresponding defining functions. 
For any multi-index $b= (b_1,\ldots, b_N)\in \C^N$ we write $\rho^b = \rho_1^{b_1} \ldots \rho_N^{b_N}$.  
Let $\mathcal{V}(\mathfrak{W})$ be the space of smooth vector fields on $\mathfrak{W}$ which are
tangent to all boundary faces. Then we state the following
\begin{defn}\label{phg}
A distribution $w$ on $\mathfrak{W}$ is said to be conormal
if $w\in \rho^b L^\infty(\mathfrak{W})$ for some $b\in \C^N$ and $V_1 \ldots V_l w \in \rho^b L^\infty(\mathfrak{W})$
for all $V_j \in \mathcal{V}(\mathfrak{W})$ and for every $l \geq 0$. An index set 
$E_i = \{(\gamma,p)\} \subset {\mathbb C} \times {\mathbb N}$ 
satisfies the following hypotheses:
\begin{enumerate}
\item $\textup{Re}(\gamma)$ accumulates only at $+ \infty$,
\item if $(\gamma,p) \in E_i$, then $(\gamma+j,p') \in E_i$ for all $j \in \N_0$ and $0 \leq p' \leq p$,
\item for each $\gamma$ there exists $P_{\gamma}\in \N_0$ such 
that $(\gamma,p)\in E_i$ for every $0\leq p \leq P_\gamma < \infty$.
\end{enumerate}
An index family $E = (E_1, \ldots, E_N)$ is an $N$-tuple of index sets. 
A conormal distribution $w$ is said to be polyhomogeneous on $\mathfrak{W}$ 
with index family $E$, denoted $w\in \mathscr{A}_{\textup{phg}}^E(\mathfrak{W})$, 
if $w$ is conormal and expands near each $H_i$ as
$
w \sim \sum_{(\gamma,p) \in E_i} a_{\gamma,p} \rho_i^{\gamma} (\log \rho_i)^p, \ 
\textup{when} \ \rho_i\to 0,
$
with coefficients $a_{\gamma,p}$ themselves being conormal distributions on $H_i$ and 
polyhomogeneous with index $E_j$ at any $H_i\cap H_j$. 
\end{defn}

Let $(x,y,z)$ be a local coordinate chart in the collar neighborhood $\U$
and consider a hypersurface $\mathscr{U}_{x_0} =\{x=x_0\} \cap \mathscr{U}$.
The tangent bundle of $\mathscr{U}_{x}$ splits into the sum of a `vertical' and `horizontal' subspace, 
and as in \eqref{bigrading} we find
\begin{align}
\Lambda^p(T_{(y,z)}\mathscr{U}_{x}) = \bigoplus_{l + k= p} \Lambda^l(T_yB) 
\otimes \Lambda^k (T_zF) =: \bigoplus_{l +k = p}  \Lambda^{l,k}_{(y,z)}(\mathscr{U}). 
\end{align}
Under the rescaling transformation $\Phi$ and the orthogonal splitting above, the
heat kernel $\HF$ takes over $\R^+\times \mathscr{U}^2$ values in the sections
$$ 
\bigoplus_{p=0}^m \, C^\infty \left((0,1), \bigoplus_{l + k= p} \left(  \Lambda^{l,k-1}(\mathscr{U}) \oplus 
 \Lambda^{l,k}(\mathscr{U})\right)\right). 
$$ 
Consider local coordinates $(t, (x,y,z), (\widetilde{x}, \widetilde{y}, \widetilde{z}))$, where $(x,y,z)$ 
and $(\widetilde{x}, \widetilde{y}, \widetilde{z})$ are coordinates on the two copies of $\widetilde{M}$ near the boundary. 
The heat kernel $\HF$ has non-uniform behaviour at the submanifolds ($M^2_h:=\R^+\times \widetilde{M}^2$)
\begin{align*}
\mathscr{P} &= \{ (t, (x,y,z), (\wx, \wy, \wz))\in M^2_h \mid t=0, \ x=\wx=0, \ y= \wy \}, \\
\mathscr{D} &= \{ (t, (x,y,z), (\wx, \wy, \wz))\in M^2_h \mid t=0, \ (x, y, z) =(\wx, \wy, \wz) \}.
\end{align*}
The parabolic blowup $\mathscr{M}^2_h$ of the heat space $M^2_h$ at these submanifolds
is described in detail in \cite{MazVer:ATM} and can be illustrated as in Figure \ref{heat-incomplete}. 
The boundary faces ff and td arise by blowing up $\mathscr{P}$ and $\mathscr{D}$, respectively. 
The three other boundary faces rf, lf, tf which arise from the respective lifts of
$\{x=0\}, \{\wx=0\}, \{t=0\}$.

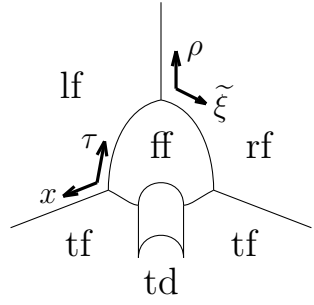
\begin{figure}[h]
\begin{center}
\begin{tikzpicture}
\draw (0,0.7) -- (0,2);
\draw (-0.7,-0.5) -- (-2,-1);
\draw (0.7,-0.5) -- (2,-1);
\draw (0,0.7) .. controls (-0.5,0.6) and (-0.7,0) .. (-0.7,-0.5);
\draw (0,0.7) .. controls (0.5,0.6) and (0.7,0) .. (0.7,-0.5);
\draw (-0.7,-0.5) .. controls (-0.5,-0.6) and (-0.4,-0.7) .. (-0.3,-0.7);
\draw (0.7,-0.5) .. controls (0.5,-0.6) and (0.4,-0.7) .. (0.3,-0.7);
\draw (-0.3,-0.7) .. controls (-0.3,-0.3) and (0.3,-0.3) .. (0.3,-0.7);
\draw (-0.3,-1.4) .. controls (-0.3,-1) and (0.3,-1) .. (0.3,-1.4);
\draw (0.3,-0.7) -- (0.3,-1.4);
\draw (-0.3,-0.7) -- (-0.3,-1.4);

\draw [very thick] (0.2,0.85) -- (0.2,1.35);
\draw [very thick] (0.2,1.35) -- (0.15,1.15);
\draw [very thick] (0.2,1.35) -- (0.25,1.15);
\draw [very thick] (0.2,0.85) -- (0.6,0.65);
\draw [very thick] (0.6,0.65) -- (0.5,0.75);
\draw [very thick] (0.6,0.65) -- (0.45,0.68);

\node at (0.8,0.65) {$\widetilde{\xi}$};
\node at (0.45,1.35) {$\rho$};

\draw [very thick] (-0.85,-0.4) -- (-1.3,-0.58);
\draw [very thick]  (-1.3,-0.58) -- (-1.1,-0.55);
\draw [very thick]  (-1.3,-0.58) -- (-1.15,-0.45);
\draw [very thick]  (-0.85,-0.4) -- (-0.75,0.15);
\draw [very thick]  (-0.75,0.15) -- (-0.72,-0.05);
\draw [very thick]  (-0.75,0.15) -- (-0.85,-0.03);

\node at (-0.95,0.15) {$\tau$};
\node at (-1.5,-0.58) {$x$};

\node at (1.3,0.1) {\large{rf}};
\node at (-1.2,0.9) {\large{lf}};
\node at (1.1, -1.2) {\large{tf}};
\node at (-1.1, -1.2) {\large{tf}};
\node at (0, -1.7) {\large{td}};
\node at (0,0.1) {\large{ff}};
\end{tikzpicture}
\end{center}
\caption{Heat-space blowup $\mathscr{M}^2_h$ for incomplete edge metrics}
\label{heat-incomplete}
\end{figure}

Instead of making the blowup procedure explicit, we choose to specify appropriate projective coordinates 
on $\mathscr{M}^2_h$. Near the top corner of ff away from tf the 
projective coordinates are given by
\begin{align}\label{top}
\rho=\sqrt{t}, \  \xi=\frac{x}{\rho}, \ \widetilde{\xi}=\frac{\widetilde{x}}{\rho}, 
\ u=\frac{y-\widetilde{y}}{\rho}, \ y, \ z, \ \widetilde{z},
\end{align}
where in these coordinates $\rho, \xi, \widetilde{\xi}$ are the defining functions of 
the faces ff, rf and lf, respectively. For the bottom corner of ff near lf, the projective coordinates are given by
\begin{align}\label{lf}
\tau=\frac{t}{x^2}, \ s=\frac{\wx}{x}, \ u=\frac{y-\widetilde{y}}{x}, \ x, \ y, \ z, \ \widetilde{z},
\end{align}
where in these coordinates $\tau, s, x$ are the defining functions of tf, lf and ff, respectively. 
For the bottom corner of ff near rf the projective coordinates are obtained by interchanging 
the roles of $x$ and $\widetilde{x}$ and are given by
\begin{align}\label{rf}
\tau=\frac{t}{\wx^2}, \ s=\frac{x}{\wx}, \ u=\frac{y-\widetilde{y}}{\wx}, \ \wx, \ \wy, \ z, \ \widetilde{z}.
\end{align}
The projective coordinates on $\mathscr{M}^2_h$ near the top of td away from tf are given by 
\begin{align}\label{td}
\eta=\sqrt{\tau}, \ S =\frac{1-s}{\eta},\ U=\frac{u}{\eta}, \  \ Z =\frac{z-\widetilde{z}}{\eta}, \  x, \ y, \ z.
\end{align}
In these coordinates tf is the face in the limit $|(S, U, Z)|\to \infty$, and ff and td are 
defined by $\widetilde{x}$ and $\eta$, respectively. The blowup heat space 
$\mathscr{M}^2_h$ is related to the original heat space $M^2_h$ via the obvious 
`blow-down map' $\beta: \mathscr{M}^2_h\to M^2_h,$
which in local coordinates is simply the coordinate change back to 
$(t, (x,y,z), (\widetilde{x}, \widetilde{y}, \widetilde{z}))$. 
\medskip

We continue under the rescaling $\Phi=\oplus_p \Phi_p$ introduced in \S \ref{geometry} and 
do not make the transformation explicit in the notation below. \medskip

We can now state the asymptotic properties of the (rescaled) $\HF$ as a 
polyhomogeneous distribution on the blowup $\mathscr{M}^2_h$, 
studied by the author jointly with Mazzeo in \cite{MazVer:ATM}.

\begin{thm}\label{thm-4-2}\textup{(\cite{MazVer:ATM}, Theorem 1.2)}
The heat kernel $\HF$ lifts under the rescaling $\Phi$ via the blowdown map $\beta$ to a 
polyhomogeneous distribution $\beta^*\HF$ on $\mathscr{M}^2_h$, with 
asymptotic expansion of leading order $(-1-\dim B)$ at the front face ff and 
$(-\dim M)$ at the diagonal face td, with index sets at the right and left boundary faces 
given by the indicial roots $\gamma \geq 1/2$ \textup{(}see \eqref{indroots}\textup{)} of the Hodge Laplacian.
\end{thm}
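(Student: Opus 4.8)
The plan is to obtain $\beta^*\HF$ as the solution of the heat equation on $\mathscr{M}^2_h$ by the standard parametrix-construction-plus-correction scheme adapted to the edge calculus, following the strategy of the author and Mazzeo. First I would lift the heat operator $(\partial_t + \Delta^{\mathscr{F}}_p)$ to $\mathscr{M}^2_h$ and examine its indicial/normal behaviour at each of the five boundary faces tf, td, ff, rf, lf. At td the situation is the classical interior one: the rescaled operator is the Euclidean heat operator in the blown-up variables $(\eta, S, U, Z)$, so a Levi-type parametrix supported near td solves the equation to infinite order there with the expected leading order $-\dim M$, and the model problem at td is exactly the Euclidean heat kernel. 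At ff the relevant model is $(\partial_\tau + N(x^2\Delta_p))$, which under the rescaling $\Phi_p$ is the operator in \eqref{laplace} on the model edge $\R^+_s\times F\times \R^b_u$; here I would invoke the Friedrichs heat kernel on the exact model cone times the Euclidean heat kernel on $\R^b$, which by separation of variables and the Cheeger-type functional calculus for $-\partial_s^2 + s^{-2}(A_p - 1/4)$ gives a polyhomogeneous model kernel with leading order $-1-\dim B$ at ff — the $\dim B$ coming from the $\R^b$-factor and the $-1$ from the one-dimensional cone. Crucially, the \emph{Friedrichs} choice $\Gamma = \operatorname{diag}(\psi_1^+,\dots,\psi_q^+)$ selects precisely the indicial roots $\gamma_j^+ = \nu_j + \tfrac12 \geq \tfrac12$ as the admissible exponents, which is what dictates the index sets at rf and lf.

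Next I would assemble an initial parametrix $H_0$ on $\mathscr{M}^2_h$ by patching the td-model and the ff-model with a partition of unity subordinate to a cover adapted to the corner structure, arranging that $H_0$ restricts to the correct models at td and ff, has index sets $\geq \tfrac12$ at rf, lf, vanishes to infinite order at tf, and satisfies the correct initial condition ($H_0|_{t=0} = \delta$-kernel) modulo an error $P = (\partial_t + \Delta_p^\Phi)H_0$ that is polyhomogeneous, vanishes to positive order at ff and to infinite order at td and tf, and (the key point) improves by one order under composition. Then I would run a Volterra/Neumann-series correction: because the heat semigroup furnishes a right inverse that gains one order at ff (by the structure of the ff-model resolvent) and the error $P$ already vanishes at td and tf, the iterated compositions $H_0 \ast P \ast \cdots \ast P$ converge in the appropriate weighted conormal space, and the corrected kernel $\HF = H_0 - H_0\ast P + \cdots$ is polyhomogeneous on $\mathscr{M}^2_h$ with unchanged leading orders at ff and td and unchanged index sets at rf, lf. Finally I would identify this constructed kernel with the true Friedrichs heat kernel by a uniqueness argument: both satisfy the heat equation with the same initial data in the Friedrichs domain, and self-adjointness of $\Delta_p^{\mathscr{F}}$ forces them to agree.

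The main obstacle, and the place where the edge case genuinely departs from the conical one, is the analysis of the composition (convolution) of polyhomogeneous kernels on $\mathscr{M}^2_h$ and the verification that the error term gains an order at ff: one must track how index sets and orders transform under the pushforward along the triple-space fibration $\mathscr{M}^3_h \to \mathscr{M}^2_h$, using Melrose's pushforward theorem, and control the interaction of the ff-face (carrying the $b$-dimensional edge parameter) with tf and td simultaneously — the presence of the $\R^b$-factor means the relevant model operator is no longer a pure cone operator and the resolvent estimates must be uniform in the edge frequency variable. Managing this corner-by-corner bookkeeping, together with the uniform-in-$b$ control of the model Friedrichs resolvent coming from the tangential operator $A_p$ in \eqref{a}, is where the bulk of the technical work lies; everything else is a fairly routine transcription of the closed-manifold heat parametrix method into the edge calculus.
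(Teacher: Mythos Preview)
Your plan is essentially the same as the construction sketched in the paper (which itself only summarizes the proof from \cite{MazVer:ATM}): build an initial parametrix from the model heat kernels at $\td$ and $\ff$, then correct by a Volterra--Neumann series using the composition formula in the edge heat calculus. The one step you elide that the paper's sketch makes explicit is the intermediate refinement $H^{(1)}\mapsto H^{(2)}=H^{(1)}+\mathcal{J}$, which forces the error to vanish to \emph{infinite} order at $\rf$ before the Neumann series is run; without this, the index-set formula $P_{\rf}=E_{\rf}\cup(E'_{\rf}+l)\cup\{\dots\}$ in Theorem~\ref{composition} would not leave the side-face index sets unchanged under iteration, so your claim of ``unchanged index sets at rf, lf'' needs exactly this extra step to hold.
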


In fact, \cite{MazVer:ATM} went beyond the heat kernel construction, 
establishing an analogue of the even-odd 
calculus for edges with consequences for metric invariance of analytic torsion.
Below we require a rather detailed understanding of the heat kernel asymptotics
and are led to provide a short overview of the heat kernel construction in 
\cite{MazVer:ATM} for the Friedrichs extension.

\begin{defn}\label{heat-calculus}  \textup{(\cite{MazVer:ATM}, Definition 3.1)}
Let $\calE = (E_{\lf}, E_{\rf})$ be an index family for the left and right boundary faces in 
$\mathscr{M}^2_h$. Let $\Psi^{l,p,\calE}_{\eh}(M)$ 
be the space of all (rescaled by the rescaling $\Phi$) 
operators $A$ with Schwartz kernels $K_A$ which lift to polyhomogeneous functions 
$\beta^*K_A$ on $\mathscr{M}^2_h$, with index family $\{(-b-3+l +j,0): j 
\in \mathbb N_0\}$ at $\ff$, $\{ (-m + p + j, 0) : j \in \mathbb N_0 \}$ 
at $\td$, vanishing to infinite order at $\tf$ and $\calE$ for the two side faces lf and rf of 
$\mathscr{M}^2_h$. When $p = \infty$, $E_{\td} = \varnothing$.
\end{defn}

As the name suggests, kernels in the calculus $\Psi^{l,p,\calE}_{\eh}(M)$ may be composed and we state the corresponding
composition result from \cite{MazVer:ATM}.

\begin{thm}\label{composition}  \textup{(\cite{MazVer:ATM}, Theorem A.2)}
For index sets $E_{\lf}$ and $E'_{\rf}$ such that $E_{\lf}+E'_{\rf}>-1$, we have
$$\Psi^{l,p,E_{\lf}, E_{\rf}}_{e-h}(M) \circ 
\Psi^{l',\infty,E'_{\lf}, E'_{\rf}}_{e-h}(M) \subset 
\Psi^{l+l',\infty,P_{\lf}, P_{\rf}}_{e-h}(M),$$
where the index sets at the side faces of $\mathscr{M}^2_h$ amount to
\begin{equation*}
\begin{split}
P_{\lf}&=E'_{\lf}\cup (E_{\lf}+ \l') 
\cup \{(z, p + q + 1): \exists \, (z,p) \in E'_{\lf},\ \mbox{and}\  (z,q) \in (E_{\lf}+ \l') \}, \\
P_{\rf}&= E_{\rf}\cup (E'_{\rf}+ \l)
\cup \{(z, p + q + 1): \exists \, (z,p) \in E_{\rf},\ \mbox{and}\  (z,q) \in (E'_{\rf}+ \l) \}. 
\end{split}
\end{equation*} 
\end{thm}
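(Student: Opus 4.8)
The plan is to prove the composition result by the standard pushforward/pullback technology for polyhomogeneous conormal distributions on manifolds with corners, following Melrose, specialized to the triple heat space adapted to the edge. First I would introduce the triple space $\mathscr{M}^3_h$ obtained from $\R^+ \times M^3$ by an iterated parabolic blowup, carrying three stretched projections $\pi_L, \pi_C, \pi_R$ down to copies of $\mathscr{M}^2_h$ (the left, composite, and right double spaces), each of which is a b-fibration once the blowup is arranged correctly; the composition of two operators $A \in \Psi^{l,p,E_{\lf},E_{\rf}}_{e-h}(M)$ and $B \in \Psi^{l',\infty,E'_{\lf},E'_{\rf}}_{e-h}(M)$ has Schwartz kernel $(\pi_C)_* \bigl( \pi_L^* K_A \cdot \pi_R^* K_B \bigr)$, after incorporating the factor coming from the fibre density, convolution in the time variable, and the rescaling $\Phi$. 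I would record the pullback behaviour of the index sets of $K_A$ and $K_B$ under $\pi_L^*$ and $\pi_R^*$ — here one uses that $A$ and $B$ vanish to infinite order at $\tf$ and that the $\td$-face of the second factor carries the trivial index set (that is what $p' = \infty$ buys us: no $\td$-contribution survives the composition, which is why the output has second index $\infty$).

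Next I would feed the product $\pi_L^* K_A \cdot \pi_R^* K_B$ into the Pushforward Theorem. The key bookkeeping is tracking how the front face ff, the diagonal td, and the two side faces lf/rf of the triple space map down to the faces of the composite double space: the ff-orders $-b-3+l+j$ and $-b-3+l'+j$ add, which accounts for the $l+l'$ in the output order at ff; the td-order $-m + j$ of $A$ survives to give the $-m+j$ order at td of the composite, while the second factor contributes nothing new there; and the crucial contribution is at the composite side faces, where the pushforward integrates out the variable corresponding to the ``middle'' copy of $M$. The extraneous interior face of the triple space (the one blown up to make $\pi_C$ a b-fibration, morally the face where the middle point collapses) is the one whose exponents get added during the pushforward: this is precisely where the shift by $\l$ respectively $\l'$ (the ff-leading orders of the factors) enters, producing the terms $E_{\lf} + \l'$ and $E'_{\rf} + \l$, and where coinciding exponents from the two factors generate one extra log power, giving the union with $\{(z, p+q+1) : \dots\}$.

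Then I would verify the integrability hypothesis: the pushforward along the fibre over the composite side faces converges exactly when the sum of the relevant exponents exceeds $-1$, which is the hypothesis $E_{\lf} + E'_{\rf} > -1$ in the statement; this guarantees the integral defining the pushed-forward kernel is finite and that the resulting distribution is again conormal with the asserted index family. Finally I would check that the time-convolution and the fibre-density factors only shift orders in a way already absorbed into the stated exponents, and that infinite-order vanishing at $\tf$ is preserved because at least one factor vanishes there to infinite order and $\tf$ of the triple space maps into $\tf$ of the composite.

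The main obstacle I expect is not the abstract pushforward machinery but the explicit construction and checking of the triple heat space: one must choose the order of blowups so that all three stretched projections are b-fibrations simultaneously, and in the edge (as opposed to the conic) setting there is the extra edge variable $y$ and the parabolic (rather than ordinary) nature of the time blowup, so the relevant faces and their exponents must be computed in each system of projective coordinates — the top-of-ff regime \eqref{top}, the bottom-of-ff regimes \eqref{lf}, \eqref{rf}, and the td regime \eqref{td} — and one must confirm that the index-set arithmetic is consistent across the overlaps. This is exactly the point where the edge case genuinely differs from Mooers's conic construction, so I would treat it carefully rather than by analogy, referring to \cite{MazVer:ATM} for the detailed geometry of $\mathscr{M}^2_h$ and its triple analogue while spelling out the exponent computation at each face.
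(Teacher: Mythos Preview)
Your proposal outlines the correct standard strategy --- triple heat space, b-fibrations, Melrose's pushforward theorem --- and this is indeed how such composition results are established. However, note that the present paper does \emph{not} prove this theorem at all: it is simply quoted from \cite{MazVer:ATM}, Theorem~5.3, with no argument given here. So there is no ``paper's own proof'' to compare against in this document; the proof lives in the cited reference, where the approach is essentially the one you describe.

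One minor remark on your sketch: you say the td-order of $A$ ``survives to give the $-m+j$ order at td of the composite,'' but the output class in the statement is $\Psi^{l+l',\infty,\ldots}$, i.e.\ the composite has \emph{empty} index set at td. This is because the second factor already has $p'=\infty$ (trivial at td), and under the pushforward the td-face of the composite receives contributions only from configurations where all three points coincide; since the right factor is smooth across its diagonal, the product is smooth there and the composite inherits infinite-order vanishing at td. Your parenthetical acknowledges this for the second factor, but the sentence about the td-order of $A$ surviving should be dropped or corrected.
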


The heat kernel construction proceeds in several steps. 
An initial heat kernel parametrix is obtained in \cite[Proposition 3.2]{MazVer:ATM}.
It is given by setting at the front face of $\mathscr{M}^2_h$ (we employ projective 
coordinates \eqref{rf})
\begin{equation}
N_\ff(H):=H^{\mathscr{C}(F)}(\tau,s,z,\widetilde{s}=1,\widetilde{z})H_{\RR^b}(\tau,u,\widetilde{u}=0;y),
\end{equation}
where $H_{\R^b}(\cdot\, ;y)$ denotes the heat kernel for $\Delta_{\R^b,y}$ obtained from the Hodge Laplacian on $B$ in local 
coordinates, by freezing coefficients at $y\in B$. $H^{\mathscr{C}(F)}$ is a the heat kernel on the model cone $(\mathscr{C}(F)=\R^+\times F, ds^2+s^2g^F)$, given by 
\begin{equation}\label{bessel-sum}
H^{C(F)}(\tau, s, z, \widetilde{s},\widetilde{z})= \sum_\nu \frac{(s \widetilde{s})^{\frac12}} {2\tau} I_{\nu}\left(\frac{s\widetilde{s}}{2\tau}\right)
e^{-\frac{s^2+\widetilde{s}^2}{4\tau}}\phi_{\nu}(z)\phi_{\nu}(\widetilde{z}),
\end{equation}
where we sum over $\nu\geq 0$ with $\nu^2\in \textup{Spec}(A)$, 
$\phi_{\nu}$ is the eigenform associated to $\nu^2\in \mbox{spec}\,(A)$.
Classical bounds for the Bessel functions show that this sum converges locally uniformly in $\calC^\infty$.
\medskip

The initial parametrix  is obtained by extending $\rho_{\ff}^{-1-b}N_\ff(H)$ smoothly off the front face.  
This defines $H^{(0)} \in \Psi^{2,0,\calE}_{\eh}(M)$, where $\calE = (E_{\lf}, E_{\rf})$ and the 
index sets $E_{\lf}= E_{\rf}$ are given by $\{(\nu + 1/2 + k, 0) \mid \nu^2 \in \mbox{spec}\,(A), k \in \N\}$.
In the next step one constructs $H^{(1)} \in \Psi^{2,0,\calE}_{\eh}(M)$ by adding correction terms near $\td$ such that
$t\calL H^{(1)} = P^{(1)} \in \Psi^{3,\infty,\calE^{(1)}}_{\eh}(M)$, where  $\calE^{(1)} = (E_{\lf}, E_{\rf}-1)$, and 
\[
\lim_{t\to 0}H^{(1)}(t, x, y, z, \widetilde{x},\widetilde{y},\widetilde{z}) = 
\delta(x-\widetilde{x})\delta(y-\widetilde{y})\delta(z-\widetilde{z}).
\]

In the next construction step one chooses a slightly finer parametrix $H^{(2)}$ with an error
which vanishes to infinite order along $\rf$ as well. This is the content of 
\cite[Proposition 3.3]{MazVer:ATM}. More precisely, there exists an element $\mathcal{J} \in \Psi^{3,0,\calE'}_{\eh}(M)$, where 
$\calE' = (E_{\lf}, E_{\rf}+1)$ and $H^{(2)}:= H^{(1)}+ \mathcal{J}$ is
such that $t\calL H^{(2)} = P^{(2)} \in \Psi^{3,\infty, E_{\lf}, \infty}_{\eh}(M)$ and $\lim_{t \to 0} H^{(2)} = \mathrm{Id}$.
The identity operator $\mathrm{Id}$ corresponds to the kernel $\delta(x-\widetilde{x})\delta(y-\widetilde{y})\delta(z-\widetilde{z})$. 
\medskip

Consider the kernels as convolution operators in time. Then, our parametrix $H^{(2)}$ solves 
$\mathcal{L}H^{(2)}= \mathrm{Id} + t^{-1}P^{(2)}$. 
The final stage in the parametrix construction is then to consider the formal Neumann series 
\[
(\mathrm{Id} + t^{-1}P^{(2)})^{-1} = \mbox{Id} + \sum_{j=1}^\infty (- t^{-1} P^{(2)})^j := \mathrm{Id} + P^{(3)},
\]
where $t^{-1} P^{(2)} \in \Psi^{1,\infty, E_{\lf},\infty}_{\eh}$ and 
$(t^{-1} P^{(2)})^j \in \Psi^{j,\infty, *,\infty}_{\eh}$ by Theorem \ref{composition}. 
By the arguments in \cite{MazVer:ATM} the exact heat kernel is then given by
\begin{equation*}
\begin{split}
\HF = H^{(2)}(\mathrm{Id} + P^{(3)})
&=H^{(2)} - H^{(2)} * t^{-1} P^{(2)} + H^{(2)} * \sum_{j=2}^\infty (- t^{-1} P^{(2)})^j \\
&=H^{(1)} + \left(\mathcal{J} + H^{(1)} * \mathcal{L}H^{(1)}\right) + \mathscr{R}, \ \mathscr{R} \in \Psi^{4,*}_{\eh}(M).
\end{split}
\end{equation*}

Recall Definition \ref{d-edge}, which requires 
$g=g_0 + h$ with $|h|_{g_0}= O(x^2)$ as $x\to 0$. We can therefore separate
$\mathcal{L}$ into the leading order term $N_\ff (\mathcal{L})$, second order term $\mathcal{L}'$, 
comprised of derivatives $\{\partial_y, \partial_y\partial_z\}$, weighted with functions smooth up to 
$\partial M$, which arise from the curvature of 
the fibration $\phi: (\partial M, g^F + \phi^*g^B) \to (B, g^B)$, and the higher order terms 
$\mathcal{L}''$, which arise from $h$ and do not lower the front face asymptotics.
Consequently
\begin{equation}\label{HJ}
\begin{split}
\HF = H^{(1)} + \left(\mathcal{J} + H^{(1)} * \mathcal{L}'H^{(1)}\right) + \mathscr{R}', \ \mathscr{R}' \in \Psi^{4,*}_{\eh}(M).
\end{split}
\end{equation}

\section{Solution to the model signaling problem}\label{model-signal-section}

We now proceed with the first step in the construction of the heat kernel 
$\HG$ for an algebraic self-adjoint extension $\Delta_\Gamma$ with domain $\mathscr{D}_\Gamma (\Delta)$. The fundamental idea is to add terms to the 
heat kernel $\HF$ for the Friedrichs extension of the Hodge Laplacian, which correct the
asymptotic behaviour of the kernel at $\rf$ and $\lf$ to satisfy the boundary conditions of $\mathscr{D}_\Gamma(\Delta)$. 
These additional terms are obtained from
the signaling solution, which is explained and constructed out of $\HF$ below in \S \ref{signal-section}. The present 
section provides a preliminary discussion of the signaling problem for a prototype of a model edge $l_\nu \oplus \Delta_{\R^b}$, 
where $l_\nu:=-\partial_x^2 + x^{-2}(\nu^2-1/4), \nu \in [0,1),$ is a regular-singular differential operator acting on $C^\infty_0(\R^+), \R^+:=(0,\infty)$,
and $\Delta_{\R^b}$ denotes the Hodge Laplacian on $\Omega^*(\R^b)$.
\medskip

The maximal domain $\mathscr{D}_{\max}(l_\nu)$ and the minimal domain $\mathscr{D}_{\min}(l_\nu)$ 
for $\l_\nu$ provide the maximal and minimal 
closed extensions of the regular singular operator $l_\nu$ in $L^2(\R^+)$, as introduced in \S \ref{section-algebraic}. As a special case of
\eqref{expansion-w}, we refer for instance to (\cite{KLP:EEA}, Proposition 3.1) for an explicit argument, any 
$u \in \mathscr{D}_{\max}(l_\nu)$ admits a partial asymptotic expansion 
\begin{equation*}
\begin{split}
&u \sim c^+[u] \,\phi^+_\nu(x) + c^-[u] \, \phi^-_\nu(x) +  \widetilde{u}, \ \textup{as} \ x\to 0, \\
&\ \phi^+_\nu(x) = x^{\nu+1/2}, \quad \phi^-_\nu(x) = \left\{
\begin{split}
&x^{-\nu+1/2},\ \nu\in (0,1),\\
&\sqrt{x}\log(x), \ \nu=0,
\end{split}
\right. \ \widetilde{u} \in \mathscr{D}_{\min}(l_\nu).
\end{split}
\end{equation*}
The weak expansion of solutions in $\mathscr{D}_{\max}(l_\nu \oplus \Delta_{\R^b})$ is of a parallel structure,
with coefficients $c^\pm[u]$ being distributions over $\R^b$ of negative Sobolev regularity. 
The \emph{signaling solution} $u(\cdot, t)\in \mathscr{D}_{\max}(l_\nu \oplus \Delta_{\R^b}), t\in [0,\infty)$, 
is defined for any given $h \in C^\infty(\R^+\times \R^b)$ such that $e^{-ct}h \in L^1(\R^+_t\times \R^b)$
for any $c>0$\footnote{This requirement quarantees that the Laplace transform of $h$ in the $\R^+$-variable, 
and the simultaneous Fourier transform in the $\R^b$-variable are well-defined.}
as a solution to the so-called \emph{signaling problem}
\begin{equation}
\label{sign}
\begin{split}
(\partial_t + l_\nu \oplus \Delta_{\R^b}) u(x,y,t)&=0, \quad u(x,y,0)\equiv 0, \\
c^-(u(\cdot, t))&=h(t), \ t>0.
\end{split}
\end{equation}
Note that $u(\cdot, t)$ cannot take values in the domain of a fixed self-adjoint extension of $l_\nu \oplus \Delta_{\R^b}$,  
since by uniqueness of solutions to the heat equation, $u(x,y,0)\equiv 0$ then implies $u\equiv 0$. 
The signaling solution is in fact also unique, since $\{u\in \mathscr{D}_{\max}(l_\nu \oplus \Delta_{\R^b}) \mid c^-[u]=0\}$
defines the Friedrichs self-adjoint extension of $l_\nu \oplus \Delta_{\R^b}$, which we denote by $L_\nu^{\mathscr{F}}\oplus \Delta_{\R^b}$.
\medskip

The heat kernel of the Friedrichs self-adjoint extension $L_\nu^{\mathscr{F}}$ of the regular singular 
operator $l_\nu$ and its restriction to $\{\wx=0\}$ are explicitly given by\footnote{see (\cite{Les:OFT}, Proposition 2.3.9) 
and compare to \eqref{bessel-sum}.}
\begin{equation}
\label{model}
\begin{split}
E_\nu(t, x,\wx) &= \frac{\sqrt{x\wx}}{2t} I_\nu\left(\frac{x\wx}{2t}\right) 
\exp\left(- \frac{x^2+\wx^2}{4t}\right), \\
NE_\nu(t,x) &:= \lim_{\wx\to 0}\left(\wx^{-\nu-1/2}E_\nu(x,\wx,t)\right)
=\frac{x^{\nu+1/2}\exp(-x^2/4t)}{\Gamma(\nu+1)2^{2\nu+1}t^{\nu+1}},
\end{split}
\end{equation}
where we have used the asymptotic behaviour of the modified Bessel function of first kind, 
cf. \cite{AbrSte:HOM}
$$
I_\nu(r) \sim \frac{(r/2)^\nu}{\Gamma(\nu+1)}, \ \textup{as} \ r\to 0.
$$
We define ($H_{\R^b}$ denotes the Euclidean heat kernel of the Hodge Laplacian on $\R^b$)
\begin{equation}
\begin{split}
F^N_\nu(h)(t,x,y) := c_\nu \cdot \int_0^t \int_{\R^b} NE_\nu (\wt, x) &H_{\R^b}(\wt, y-\wy) h(t-\wt, \wy) \, d\wt \, d\wy,  
\\  &c_\nu := \left\{\begin{split}
&(-1) , \ \textup{for} \ \nu=0, \\ & 2\nu, \ \textup{for} \ \nu \in (0,1).
\end{split}\right.
\end{split}
\end{equation}
for any $h \in C^\infty(\R^+\times \R^b)$ such that $e^{-ct}h \in L^1(\R^+_t\times \R^b)$
for any $c>0$. The asymptotic behavior of $F^N_\nu(h)$ as $x\to 0$ is studied by means of the Laplace transform $\mathscr{L}$ in the time variable $t\in \R^+$ and the Fourier transform $\mathscr{F}$ in the Euclidean variable $y\in \R^b$.
For any $g \in C^\infty(\R^+\times \R^b)$ such that $e^{-ct}g \in L^1(\R^+_t\times \R^b)$
for any $c>0$, both transforms are defined as follows
\begin{align}\label{laplace-transform}
&(\mathscr{L}g)(\zeta, y) = \int_{\R^+} g(t,y)\exp(-\zeta t) dt, \textup{Re}(\zeta) >0, \\
&(\mathscr{F}g)(t,\w) = \int_{\R^b} g(t,y) e^{-i\w \cdot y} dy.
\end{align}
Hence we assume henceforth that $h \in C^\infty(\R^+\times \R^b)$ such that 
$e^{-ct}h \in L^1(\R^+_t\times \R^b)$ for any $c>0$.
Finally, the inverse Laplace transform is given for any any $\delta >0$ and analytic $L(\zeta)$, 
integrable over $\textup{Re}(\zeta) = \delta$, by
\begin{align}\label{laplace-inverse}
(\mathscr{L}^{-1}L)(t) = \frac{1}{2\pi i}\int_{\delta + i\, \R} e^{t\zeta} L(\zeta) \, d\zeta.
\end{align} 

\begin{prop}\label{model-F} $F^N_\nu(h)$ is indeed the signaling solution to \eqref{sign}. 
In particular\footnote{We fix the main branch of the logarithm on $\C\backslash \R^-, \R^-=(-\infty,0]$
throughout this paper.}
\begin{equation}
\begin{split}
c^-(F^N_\nu(h)) = h, \ c^+(F^N_\nu(h)) &= G^N_\nu * h(t) := \int_0^t \int_{\R^b} G^N_\nu (t-\wt, y-\wy) h(\wt, \wy) \, d\wt\, d\wy, \\
 \textup{where} \ \left(\mathscr{L} \circ \mathscr{F} G^N_\nu\right)(\zeta, \w) 
&\equiv \widetilde{G}^N_\nu(\zeta + |\w |^2) \\ &:= \left\{
\begin{split}
& \log \sqrt{\zeta + |\w |^2} + \gamma - \log 2, \ \nu=0, \\
& \frac{\Gamma(-\nu)}{\Gamma(\nu)}\, 2^{-2\nu}\, (\zeta + |\w |^2)^\nu , \ \nu \in (0,1).
\end{split}
\right.
\end{split}
\end{equation}
\end{prop}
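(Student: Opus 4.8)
The plan is to verify the two defining properties of the signaling solution for $F^N_\nu(h)$ separately: that it solves the homogeneous heat equation with zero initial data, and that its coefficient asymptotics as $x\to 0$ are $c^-=h$ and $c^+=G^N_\nu * h$. The first property is essentially formal: since $NE_\nu(t,x)$ arises as a boundary limit of the Friedrichs heat kernel $E_\nu(t,x,\wx)$, which solves $(\partial_t + l_\nu)E_\nu = 0$ away from $x=0$, and $H_{\R^b}$ solves the Euclidean heat equation, the tensor-product/Duhamel structure of $F^N_\nu(h)$ shows $(\partial_t + l_\nu\oplus\Delta_{\R^b})F^N_\nu(h)=0$ for $t>0$; the convolution in time against $NE_\nu$, which is $O(t^{-\nu-1})e^{-x^2/4t}$ and hence vanishes rapidly as $t\to 0$ for $x>0$, gives $u(x,y,0)\equiv 0$. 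First I would write this out, taking care that $NE_\nu(\wt,x)H_{\R^b}(\wt,y,\wy)$ is exactly the boundary limit $\lim_{\wx\to 0}\wx^{-\nu-1/2}$ of the Friedrichs heat kernel of $l_\nu\oplus\Delta_{\R^b}$, so that the Duhamel identity for that operator applies verbatim.

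The substance is the coefficient computation, which I would do on the Laplace–Fourier side. Applying $\mathscr L\circ\mathscr F$ in $(t,y)$ turns the time-space convolution into a product: $(\mathscr L\circ\mathscr F F^N_\nu(h))(\zeta,\w,x) = c_\nu\,(\mathscr L NE_\nu)(\zeta+|\w|^2,x)\cdot(\mathscr L\circ\mathscr F h)(\zeta,\w)$, using that $\mathscr F H_{\R^b}(t,\cdot)(\w)=e^{-t|\w|^2}$ combines with the $e^{-\zeta t}$ of the Laplace transform into $e^{-(\zeta+|\w|^2)t}$. So everything reduces to a single one-variable integral: the Laplace transform in $\wt$ of $NE_\nu(\wt,x)=x^{\nu+1/2}e^{-x^2/4\wt}/(\Gamma(\nu+1)2^{2\nu+1}\wt^{\nu+1})$, evaluated at the spectral parameter $\sigma:=\zeta+|\w|^2$. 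This is a classical integral: $\int_0^\infty \wt^{-\nu-1}e^{-x^2/4\wt}e^{-\sigma\wt}\,d\wt$ evaluates in terms of a modified Bessel $K_\nu$, and more precisely $(\mathscr L NE_\nu)(\sigma,x)$ has, as $x\to 0$, an expansion whose first two terms are exactly a constant (the $x^{\nu+1/2}$ coefficient, after matching normalizations this is $c_\nu^{-1}$, giving $c^-=h$) and a term proportional to $x^{-\nu+1/2}$ (for $\nu>0$) or $\sqrt x\log x$ (for $\nu=0$), whose coefficient is the claimed $\widetilde G^N_\nu(\sigma)$. I would extract these two terms either by expanding $K_\nu(x\sqrt\sigma)$ via its small-argument asymptotics ($K_\nu(r)\sim\tfrac12\Gamma(\nu)(r/2)^{-\nu}+\tfrac12\Gamma(-\nu)(r/2)^{\nu}$ for $\nu\in(0,1)$, with the logarithmic modification at $\nu=0$), or by substituting $\wt\mapsto x^2 \wt$ directly in the integral to isolate the $x$-powers. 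The $\nu=0$ case requires the $\log$ correction in $K_0$ and reproduces the $\gamma-\log 2$ constant. Undoing $\mathscr L\circ\mathscr F$ then gives $c^+(F^N_\nu(h))=G^N_\nu * h$ with the stated transform, and $c^-(F^N_\nu(h))=h$.

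One technical point to dispatch along the way is the passage from the weak (distributional, negative-Sobolev-in-$y$) nature of the asymptotic expansion in \eqref{expansion-w} to a genuine pointwise-in-the-transformed-variables statement: the expansion of $F^N_\nu(h)$ in $x$ should be read after pairing with a test function in $y$ and, correspondingly, the coefficients $c^\pm$ are the functions of $(t,y)$ obtained this way; the Laplace–Fourier computation is legitimate precisely because the transforms are taken in $(t,y)$ and the $x$-dependence of $(\mathscr L NE_\nu)(\sigma,x)$ is real-analytic in $x>0$ with a controlled two-term expansion at $x=0$. I would also record that $F^N_\nu(h)-c^+\phi^+_\nu-c^-\phi^-_\nu$ lies in $\dom_{\min}$, which follows from the remainder estimate on $(\mathscr L NE_\nu)(\sigma,x)$ (the error beyond the two leading $x$-powers is $O(x^{\nu+3/2})$, hence square-integrable near $x=0$ together with its relevant derivatives) together with the decay in $\sigma$ needed to invert the transforms. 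The main obstacle is purely computational bookkeeping: correctly normalizing the constant $c_\nu$ so that the $x^{\nu+1/2}$-coefficient of $c_\nu(\mathscr L NE_\nu)$ is identically $1$ (giving $c^-=h$ rather than a multiple), and keeping the $\nu=0$ logarithmic case bundled correctly with the $\Gamma(-\nu)/\Gamma(\nu)$ factor of the $\nu>0$ case — there is no conceptual difficulty beyond this, since all the needed Bessel asymptotics are quoted from \cite{AbrSte:HOM} and the heat-kernel formulas from \cite{Les:OFT}.
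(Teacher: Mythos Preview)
Your approach is essentially identical to the paper's: apply $\mathscr{L}\circ\mathscr{F}$ to reduce the convolution to a product, evaluate $\mathscr{L}NE_\nu(\sigma,x)$ in closed form as $\frac{\sqrt{x}\,\sigma^{\nu/2}}{2^\nu\Gamma(\nu+1)}K_\nu(x\sqrt{\sigma})$ with $\sigma=\zeta+|\w|^2$, expand $K_\nu$ for small argument, and read off the $\phi^\pm_\nu$-coefficients. The paper does exactly this, and your additional remarks on the heat equation, the zero initial data, and the $O(x^{3/2})$ remainder lying in $\mathscr{D}_{\min}$ are consistent with (and slightly more explicit than) what the paper writes.

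One bookkeeping slip to fix: you have the $\pm$ labels reversed. Recall $\phi^+_\nu=x^{\nu+1/2}$ and $\phi^-_\nu=x^{-\nu+1/2}$ (for $\nu>0$). The leading small-$x$ term of $(\mathscr{L}NE_\nu)(\sigma,x)$ is $x^{-\nu+1/2}$ (coming from the $z^{-\nu}$ term of $K_\nu$), with coefficient $\frac{\Gamma(\nu)}{2\Gamma(\nu+1)}=\frac{1}{2\nu}=c_\nu^{-1}$; this is the $\phi^-_\nu$ term and yields $c^-(F^N_\nu(h))=h$. The subleading $x^{\nu+1/2}$ term (from the $z^{+\nu}$ part of $K_\nu$) is $\phi^+_\nu$ and carries the factor $\widetilde{G}^N_\nu(\sigma)$, giving $c^+(F^N_\nu(h))=G^N_\nu*h$. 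So where you wrote ``the $x^{\nu+1/2}$ coefficient \dots giving $c^-=h$'' and ``a term proportional to $x^{-\nu+1/2}$ \dots whose coefficient is $\widetilde G^N_\nu(\sigma)$'', the two powers should be interchanged. With that correction your argument matches the paper.
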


\begin{proof}
We compute for $\textup{Re}(\zeta)>0$
\begin{align*}
(\mathscr{L}\circ \mathscr{F} F^N_\nu(h))(x,\w, \zeta) &=  c_\nu \cdot \mathscr{L} (NE_\nu \cdot \mathscr{F}H_{\R^b})(x,\w ,\zeta) 
\cdot  (\mathscr{L}\circ \mathscr{F}h)(\zeta,\w) \\
&=\int_0^\infty NE_\nu (t,x) e^{-t(\zeta + |\w |^2)} dt \cdot  (\mathscr{L}\circ \mathscr{F}h)(\zeta,\w)
\\ &= c_\nu \frac{ \sqrt{x} \, (\zeta + |\w |^2)^{\nu/2}} {2^\nu \Gamma (\nu+1)} \, K_\nu(x\sqrt{\zeta + |\w |^2})  
\cdot (\mathscr{L}\circ \mathscr{F}h)(\zeta,\w),
\end{align*}
where $K_0$ is the modified Bessel function of second kind, and in the definition of $\sqrt{\zeta}$
we fix the branch of logarithm in $\C\backslash \R^-$. By assumption on $h$, 
$(\mathscr{L}\circ \mathscr{F}h)(\zeta,\w)$ is well-defined for $\textup{Re}(\zeta)>0$. 
The Bessel function $K_\nu(z)$ admits an asymptotic expansion, see \cite{AbrSte:HOM} 
\begin{equation*}
K_\nu(z) = \left\{
\begin{split}
& (\log 2- \gamma ) - \log(z), \ \textup{for} \ \nu=0, \\
& 2^{ \nu-1}\Gamma(\nu) \, z^{-\nu}+ 2^{-\nu-1} \Gamma(-\nu) \,z^{\nu}, \ \nu>0,
\end{split}
\right\} + \widetilde{K}(z), \widetilde{K}(z)=O(z) \ \textup{as} \ z\to 0.
\end{equation*}
where $\gamma\in \R$ is the Euler constant. Consequently 
\begin{equation*}
\begin{split}
&c^+(\mathscr{L}\circ \mathscr{F} F^N_\nu(h)(\cdot,\zeta, \w)) = c_\nu \cdot (\mathscr{L}\circ \mathscr{F}h)(\zeta,\w) \cdot \left\{
\begin{split}
&(\log 2 -\log \sqrt{\zeta + |\w |^2} - \gamma),\ \nu=0, \\
&\frac{\Gamma(-\nu)(\zeta + |\w |^2)^\nu}{\Gamma(\nu+1)\, 2^{2\nu+1}}, \ \nu\in (0,1),
\end{split}\right. \\
&c^-(\mathscr{L}\circ \mathscr{F}F^N_\nu(h)(\cdot,\zeta, \w)) =  (\mathscr{L}\circ \mathscr{F}h)(\zeta,\w).
\end{split}
\end{equation*}
Taking the inverse Laplace and Fourier transform, we obtain 
\begin{align*}
F^N_\nu(h)(x,t) &=\phi^+_\nu(x) (\mathscr{L}\circ \mathscr{F})^{-1}(c^+(\mathscr{L}\circ \mathscr{F} F^N_\nu(h))) \\ &+ 
\phi^-_\nu(x)(\mathscr{L}\circ \mathscr{F})^{-1}(c^-(\mathscr{L}\circ \mathscr{F} F^N_\nu(h))) \\ &+ 
 \frac{ \sqrt{x}} {2^\nu \Gamma (\nu+1)}(\mathscr{L}\circ \mathscr{F})^{-1}((\mathscr{L}\circ \mathscr{F} h)\, \widetilde{K}(x\sqrt{\zeta+|\w |^2})
 (\zeta + |\w |^2)^{\nu/2}),
\end{align*}
where each $\phi^\pm_\nu$ coefficient exists and 
the third summand is $O(x^{3/2})$, as $x\to 0$. Consequently, indeed 
$$\mathscr{L}\circ \mathscr{F}(c_{\pm}(F^N_\nu(h))) =c_{\pm} (\mathscr{L}\circ \mathscr{F} F^N_\nu(h)).$$
This yields the stated explicit expression for the Laplace-Fourier transform of $G^N_\nu$
\begin{equation}\label{LG}
(\mathscr{L} \circ \mathscr{F} G^N_\nu)(\zeta, \w) =  \left\{\begin{split}
& \log \sqrt{\zeta + |\w |^2} + \gamma - \log 2, \ \nu=0, \\
& \frac{\Gamma(-\nu)}{\Gamma(\nu)}\, 2^{-2\nu}\, (\zeta + |\w |^2)^\nu , \ \nu \in (0,1).
\end{split} \right.
\end{equation}
\end{proof}

\section{Solution to the signaling problem}\label{signal-section}

We expand the lifted heat kernel $\beta^*\HF$ (as before rescaled under $\Phi$ from \S \ref{geometry}) 
asymptotically at the left boundary face,
using projective coordinates \eqref{lf}, where $s=\wx / x$ is the defining function of $\lf$ 
and $x$ the defining function of the front face. We obtain by Theorem \ref{thm-4-2}
\begin{align*}
\beta^*\HF \sim \sum_{\nu\geq 0} \sum_{k \in \N_0} G^k_{\nu}\, s^{\nu+1/2+k}, \ \textup{as} \ s\to 0,
\end{align*}
where the sum runs over $\nu\geq 0$ with $\nu^2$ being the eigenvalues of the tangential operator 
$A=\oplus_p A_p$, cf. \eqref{a}, and natural numbers $k\geq 0$. The coefficient $G^k_{\nu}$
is a polyhomogeneous function on the left boundary face, of leading order $(-1-b)$
at the front face and vanishing to infinite order at $\tf$. In the projective coordinates \eqref{lf}, we find 
$$
\beta^*(x\partial_x) = x\partial_x - s\partial_s, \ \beta^*(x\partial_y)= \partial_u, \ \beta^*(\partial_z)= \partial_z,
\ \beta^*(x^2\partial_t) = \partial_\tau.
$$
Hence the action of $\beta^*(x^2(\partial_t + \Delta))$ keeps the order of $s^\A$-terms invariant. Consequently, since
$\HF$ solves the heat equation, so does each coefficient $G^k_{\nu}\, s^{\nu+1/2+k}$ in the heat kernel expansion 
at $\lf$. We define
\begin{align}
G^0_{\nu}\, s^{\nu+1/2} = G^0_{\nu} \, \wx^{\nu+1/2}x^{-\nu-1/2}=:H_\nu \, \wx^{\nu+1/2},
\end{align}
where $H_\nu$ solves heat equation, since so does $G^0_{\nu}\, s^{\nu+1/2}$. The kernel $H_\nu$ 
lifts to a polyhomogeneous function on the left face $\lf$ of leading order $(-3/2-b-\nu)$
at the front face and vanishing to infinite order at $\tf$. \medskip

The left face $\lf$ is itself a parabolic blowup of $\R^+_t \times \R^+_x \times \partial M_{(y,z)}
\times \partial M_{(\wy, \wz)}$ at $\{(t,x,y,z,\wy,\wz) \in (\R^+)^2\times (\partial M)^2 \mid t=x=0, y=\wy\}$.
The left face $\lf$ is a fibration over $B$ of manifolds with corners, and at each $y\in B$ its fibre may be 
illustrated as 

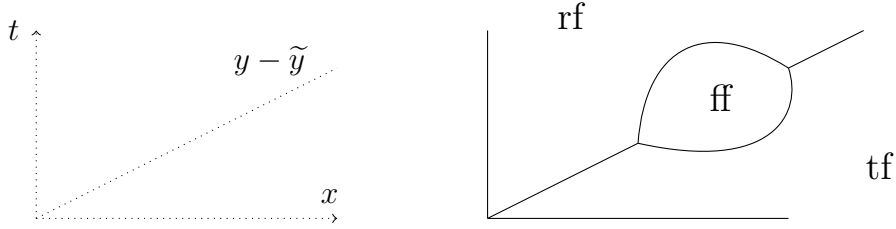
\begin{figure}[h]
\begin{center}
\begin{tikzpicture}
\draw (0,0) -- (2,1);
\draw (4,2) -- (5,2.5);
\draw (0,0) -- (4,0);
\draw (0,0) -- (0, 2.5);
\draw (2,1) .. controls (2.1,2.3) and (2.9,2.7) .. (4,2);
\draw (2,1) .. controls (3.8,0.6) and (4.2,1.4) .. (4,2);
\node at (3.1,1.6) {\large{ff}};
\node at (1.1,2.7) {\large{rf}};
\node at (5.2,0.7) {\large{tf}};

\draw[dotted] (-6,0) -- (-2,2);
\draw[dotted, ->] (-6,0) -- (-2,0);
\draw[dotted, ->] (-6,0) -- (-6, 2.5);

\node at (-2.1,0.3) {$x$};
\node at (-6.3,2.5) {$t$};
\node at (-2.9,2.1) {$y-\wy$};

\end{tikzpicture}
\end{center}
\label{figure-edge}
\caption{Fibre of $\lf$ at $y\in B$ as a blowup of $(\R^+)^2\times F\times \partial M$.}
\end{figure} 

with projective coordinates near $\ff$ away from $\rf$ given by 
\begin{align}\label{coord-left-1}
\tau = \frac{\wt}{x^2}, \ u=\frac{y-\wy}{x}, \ x, \ y, \ z, \ \wz,
\end{align}
where $x$ is the defining function of the front face $\ff$ and $\tau$ is the 
defining function of the temporal face $\tf$. Projective coordinates near 
$\ff$ away from $\tf$ are given by
\begin{align}\label{coord-left-2}
\rho=\sqrt{\wt}, \ \xi=\frac{x}{\rho}, \ u=\frac{y-\wy}{\rho}, \ y, \ z, \ \wz,
\end{align}
where $\rho$ is the defining function of $\ff$ and $\xi$ is the 
defining function of the right face $\rf$. In local coordinates, the blowdown map
$\beta:\lf \to (\R^+)^2\times (\partial M)^2$ is simply the change back to standard coordinates
$(t,x,y,z,\wy,\wz)$. \medskip

Globally, $H_\nu$ is thus defined by the regularized limit in the sense that divergent asymptotic terms are neglected
$$
\beta^*H_\nu = \underset{\rho_{\lf}\to 0}{\textup{reg-lim}} (\beta^*\HF \cdot \rho_{\lf}^{-\nu-1/2}) \cdot \rho_\ff ^{-\nu-1/2}.
$$
The relation between the kernels \eqref{model} in the model situation and the 
kernels $\HF, H_\nu$ in the setup of an admissible edge manifold is then as follows. 
According to the heat kernel construction in \S \ref{s-asymptotics}, 
the expansion of the lifts $\beta^*\HF$ and $\beta^*H_\nu$ at $\ff$ in 
projective coordinates 
\eqref{top} at the top corner of $\mathscr{M}^2_h$ is given by
\begin{equation}\label{normal}
\begin{split}
&\beta^*\HF = \rho^{-1-b} 
\sum_{\nu\geq 0} E_\nu(1,\xi,\widetilde{\xi}) P_\nu(z, \wz;\wy) H_{\R^b}(1, u;y) +\beta^*K_1 + \beta^*K_2 \\
&\beta^*H_\nu = ( \rho^{-3/2-b-\nu} NE_\nu(1,\xi) H_{\R^b}(1,u;y) 
+ \beta^*\kappa_1 + \beta^*\kappa_2 ) P_\nu(z, \wz;\wy), 
\end{split}
\end{equation}
as $\rho \to 0$, where $K_{1,2}$ and $\kappa_{1,2}$ are the higher order terms, 
the sum runs over $\nu \geq 0$ with $\nu^2\in \textup{Spec}(A)$ and 
$P_\nu(z, \wz;\wy)$ is the Schwartz kernel of the fibrewise projection onto the
corresponding eigenspaces at $\wy\in B$. Since the asymptotic terms of $\beta^*\HF$
at $\rf$ in the neighborhood of the front face arise from convolution with $H^{(1)}$, 
the restriction of $\beta^*\HF$ to $\rf$, and by symmetry also to $\lf$, has the projection $P_\nu$ as a factor
in each summand of its front face expansion. Thus, each summand in the front face expansion of 
$\beta^*H_\nu$ indeed has the projection $P_\nu$ as a factor. Moreover, as $(\xi, \widetilde{\xi}, \rho) \to 0$, the higher order terms satisfy
\begin{equation}\label{higher}
\begin{split}
&\beta^*K_1 =  \sum_{\nu \geq 0} \sum_{j=0}^\infty O(\rho^{-b} (\xi \, \widetilde{\xi})^{\, \nu+1/2+j}), 
\qquad \beta^*\kappa_1 = O(\rho^{-1/2- b-\nu}\, \xi^{\, \nu+1/2}), \\
&\beta^*K_2 =\sum_{\nu \geq 0} \sum_{j=0}^\infty O(\rho^{-b+1} (\xi \, \widetilde{\xi})^{\, \nu+1/2+j}), \
\ \beta^*\kappa_2=O(\rho^{1/2 -b-\nu}\, \xi^{\, \nu+1/2}).
\end{split}
\end{equation}

We define for any $\nu \geq 0$ with $\nu^2\in \textup{Spec}(A) \cap [0,1)$, and any
$h\in C^\infty(\R^+\times \partial M)$ with $e^{-ct}h\in L^1(\R^+_t\times \partial M)$ for any $c>0$
\begin{align*}
F_\nu(h)(t,x,y,z) := c_\nu \int_0^t \int_{\partial M}H_\nu(\wt, p, \widetilde{p}) \, h(t-\wt, \widetilde{p}) \, 
d\wt \, d\textup{vol}_{\partial M}(\widetilde{p})  =: c_\nu H_\nu * h.
\end{align*}
Since $H_\nu$ solves the heat equation, so does $F_\nu(h)$. 
\medskip

The fundamental component in the heat kernel construction 
of Mooers in \cite{Moo:HKA} is a solution $u(t,\cdot) \in \mathscr{D}_{\max}(\Delta)$ 
to the \emph{signaling problem}
\begin{equation}\label{signal}
\begin{split}
(\partial_t + \Delta) u&=0, \quad u(0,\cdot)\equiv 0, \\
c^-(u(t, \cdot))&=P_\nu h(t),
\end{split}
\end{equation}
where $P_\nu$ is the fibrewise projection onto the $\nu^2$-eigenspace
of the tangential operator $A$, cf. \eqref{a}. Note that for $P_\nu h\neq 0$, the solution $u(t,\cdot)$ cannot 
lie in any fixed self-adjoint domain of $\Delta$ for all $t>0$, since by uniqueness of solutions to 
the heat equation $u(0,\cdot)\equiv 0$ then implies $u\equiv 0$. 
The signaling solution is in fact also unique, since if $c^-(u(t, \cdot))=0$, 
then $u(t,\cdot) \in \mathscr{D}(\Delta^{\mathscr{F}})$ for $t>0$, and hence 
$u(0,\cdot)\equiv 0$ then implies $u\equiv 0$.

\begin{thm}\label{FG} $F_\nu(h), \nu^2\in \textup{Spec}(A) \cap [0,1)$, 
is a signaling solution to \eqref{signal}. More precisely,
in the notation analogous to \eqref{expansion-w}, we find
\begin{equation}
\begin{split}
&F_\nu(h)(x) \sim c^+(F_\nu(h))\psi^+_\nu(x) + c^-(F_\nu(h))\psi^-_\nu(x) + O(x^{3/2}), \ x\to 0,
\\ &c^+(F_\nu(h)) = (G^N_\nu + G'_{\nu})* P_\nu h + G''_\nu *  \partial_y P_\nu h + \theta_\nu \cdot P_\nu h, 
\\ &c^-(F_\nu(h)) =P_\nu h,
\end{split}
\end{equation}
where $G^N_\nu$ was introduced in Proposition \ref{model-F} and 
$G'_\nu, G''_\nu$ lift to polyhomogeneous functions on the parabolic blowup of
$\R^+\times B^2$ around $Y:=\{(t, y, \wy) \in \R^+\times B^2 \mid  t=0, y=\wy\}$
of leading order $(-2\nu-b)$ at the front face. $\theta_\nu \in \R$ is a constant and zero unless $\nu=1/2$.
The projective coordinates on $[\R^+\times B^2, Y]$ near its front face are 
\begin{align}\label{left-coord}
\rho=\sqrt{t}, \ u=\frac{y-\wy}{\sqrt{t}}, \ y.
\end{align}
\end{thm}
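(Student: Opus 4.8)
The plan is to reduce the statement on the genuine edge manifold to the model computation of Proposition \ref{model-F} by expanding everything at the front face $\ff$ and peeling off one term at a time. First I would observe that $F_\nu(h) = c_\nu H_\nu * h$ solves the heat equation with zero initial data automatically, since $H_\nu$ does and the convolution is in time; the initial condition $F_\nu(h)(0,\cdot)\equiv 0$ follows from the $\ff$-leading order of $H_\nu$ and a dominated-convergence estimate in $\wt$. So the content is entirely the asymptotics of $F_\nu(h)$ as $x\to 0$ and the identification of the coefficients $c^\pm$.

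Next I would insert the front-face expansion \eqref{normal}--\eqref{higher} of $\beta^*H_\nu$ in projective coordinates \eqref{top} into the convolution integral. The \emph{leading} term $\rho^{-3/2-b-\nu}\,NE_\nu(1,\xi)H_{\R^b}(1,u)P_\nu(z,\wz)$ is, after undoing the blowup, exactly $NE_\nu(\wt,x)H_{\R^b}(\wt,y-\wy)P_\nu(z,\wz)$ — i.e. the model signaling kernel of Proposition \ref{model-F} tensored with the fibrewise projection. Convolving this against $P_\nu h$ and invoking Proposition \ref{model-F} (via the Laplace--Fourier computation there) produces precisely the $c^-$-coefficient $P_\nu h$, the $\psi^-_\nu(x)$ term, and the $G^N_\nu * P_\nu h$ contribution to $c^+$, together with an $O(x^{3/2})$ remainder coming from $\widetilde K_\nu$. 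The subleading terms $\beta^*\kappa_1,\beta^*\kappa_2$ in \eqref{higher}, which arise from $\mathcal L'$ (the curvature-of-the-fibration derivatives $\partial_y,\partial_y\partial_z$) via \eqref{HJ}, are handled the same way: each is still a polyhomogeneous kernel on the blowup of $\R^+\times B^2$ along $Y$, with front-face order $(-2\nu-b)$ by \eqref{higher} and the fact that $\beta^*(x^2\partial_t)=\partial_\tau$ etc. preserve the $s$-order; convolving them against $P_\nu h$ (resp.\ against $\partial_y P_\nu h$, after integrating by parts in $\wy$ to move the horizontal derivative off the kernel) yields the $G'_\nu * P_\nu h$ and $G''_\nu * \partial_y P_\nu h$ terms. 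The exceptional constant $\theta_\nu$, nonzero only for $\nu=1/2$, is a resonance artifact: when $\nu=1/2$ the indicial root $-\nu+1/2=0$, so $\psi^-_{1/2}$ is a constant in $x$ and a term in the expansion of $H_\nu$ that would ordinarily contribute to $c^+$ at order $x^{\nu+1/2}=x$ instead degenerates; tracking the leading $\xi$-behaviour of $\beta^*\kappa_1$ in that single case produces the local (undelayed, $\delta$-in-time) contribution $\theta_\nu\cdot P_\nu h$. Finally, uniqueness of the signaling solution — already noted in the paragraph preceding the theorem, since $c^-[u]=0$ forces $u\in\dom(\Delta_p^{\mathscr F})$ and then $u\equiv 0$ — shows $F_\nu(h)$ is \emph{the} signaling solution, so the identification of coefficients is unambiguous.

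The polyhomogeneity and leading-order claims for $G'_\nu, G''_\nu$ on $[\R^+\times B^2, Y]$ I would get by pushing forward: $H_\nu$ is polyhomogeneous on the blown-up left face with the orders recorded after \eqref{normal}, the convolution in $(\wt,\wy,\wz)$ is a fibre integration over the $F$-factor and a pushforward in time and in $\wy$ that is transversal to $Y$ in the appropriate sense, and Melrose's pushforward theorem then gives polyhomogeneity on $[\R^+\times B^2, Y]$; bookkeeping the front-face exponent through the $NE_\nu(\wt,x)\sim x^{\nu+1/2}\wt^{-\nu-1}$ factor and the $H_{\R^b}$-type Gaussian in $u=(y-\wy)/\sqrt t$ yields the order $(-2\nu-b)$ in coordinates \eqref{left-coord}.

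The main obstacle I anticipate is \emph{not} the leading term — that is a clean reduction to Proposition \ref{model-F} — but controlling the convergence and the precise order of the \emph{error} terms $\beta^*K_2,\beta^*\kappa_2$ and the tail of the sum over $\nu$ in \eqref{normal}: one must check that convolving the remainders against $h$ (which is only $L^1_{\mathrm{loc}}$ in time and in $C^\infty$ only away from $t=0$) does not spoil the $O(x^{3/2})$ bound, and this is exactly where the admissibility hypothesis $|h|_{g_0}=O(x)$ (or $O(x^2)$, as invoked around \eqref{HJ}) is needed so that $\mathcal L''$ does not lower the front-face asymptotics. The integrations by parts in $\wy$ that move $\partial_y$ onto $h$ also require care about boundary terms at infinity in the $u$-variable, but the Gaussian decay of the $H_{\R^b}$-factor in \eqref{normal} kills those. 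The $\nu=1/2$ logarithmic/resonant bookkeeping for $\theta_\nu$ is fiddly but localized to one eigenvalue and does not present a conceptual difficulty.
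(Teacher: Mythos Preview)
Your overall strategy matches the paper's: split $F_\nu(h)$ into the model contribution $F^N_\nu(h)$ (handled by Proposition~\ref{model-F}) plus correction terms coming from the higher-order pieces $\kappa_1,\kappa_2$ of $\beta^*H_\nu$, then read off the $c^\pm$-coefficients from each. However, two concrete points in your proposal do not survive contact with the actual computation.

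First, you omit the key technical device. To extract the coefficient of $x^{\nu+1/2}$ from the correction integrals $\int_0^t\int_B \kappa_j(\wt,x,y,\wy)\,P_\nu h(t-\wt,\wy,z)\,d\wt\,d\mathrm{vol}_B$, the paper splits the $\wt$-integral at $\wt=x^2$ into $F'_j=\int_0^{x^2}$ and $F''_j=\int_{x^2}^t$. The two pieces live in \emph{different} coordinate patches of the blown-up left face: $F'_j$ is analyzed in the lower-corner coordinates $(\tau,u,x)=(\wt/x^2,(y-\wy)/x,x)$, while $F''_j$ uses the top-corner coordinates $(\rho,\xi,u)=(\sqrt{\wt},x/\sqrt{\wt},(y-\wy)/\sqrt{\wt})$. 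Without this split you cannot cleanly separate the $x^{\nu+1/2}$ contribution, because $\kappa_j$ is polyhomogeneous on the blowup, not on the base, and its leading behaviour in $x$ depends on which corner you approach. Your appeal to ``Melrose's pushforward theorem'' is not a substitute here: the pushforward is not along a $b$-fibration with respect to the $x$-variable you want to expand in, so the theorem does not directly yield the $x$-expansion of the convolution. The paper also introduces an additional term $\kappa_0$, arising from Taylor-expanding the volume density $d\mathrm{vol}_B(\wy)$ around $y$, which you do not mention; it is handled by the same split and an integration-by-parts identity $u\,H_{\R^b}(1,u)=\tfrac12\partial_u H_{\R^b}(1,u)$ rather than by moving a pre-existing $\partial_y$ off the kernel.

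Second, your mechanism for $\theta_\nu$ is not the one that actually produces it. You attribute $\theta_\nu$ to the degeneracy $-\nu+\tfrac12=0$, i.e.\ to $\psi^-_{1/2}$ being constant. In the paper the constant $\theta_\nu$ comes from the near-corner pieces $F'_0,F'_1$ (and from the $\kappa_J$-part of $F''_1$): those integrals contribute at order $x^{-\nu+3/2}$, and this coincides with the $c^+$-order $x^{\nu+1/2}$ precisely when $-\nu+\tfrac32=\nu+\tfrac12$, i.e.\ $\nu=\tfrac12$. There is no contribution to $c^-$ from these terms in any case. So the resonance is between $\gamma^+_\nu=\nu+\tfrac12$ and $-\nu+\tfrac32$, not between $\gamma^-_\nu$ and $0$; your heuristic would not locate the term in the correct place in the calculation.
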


\begin{proof} Choose a cutoff function $\phi \in C^\infty_0(\lf)$ with compact support, 
such that $\phi\equiv 1$ in an open neighborhood of $\ff$, and moreover $y$
and $\wy$ lie in the same coordinate chart of $B$ if $\beta^{-1}(t,x,y,z,\wy,\wz)\in \textup{supp}\, \phi$.
The kernel $(1-\phi)\beta^*H_\nu$ is of leading order $(\nu+1/2)$ at $\rf$, vanishing identically 
in an open neighborhood of $\ff$, and hence contributes to $c^+(F_\nu(h))$ by
$$
\int_0^t \int_B G(\wt, y, \wy) P_\nu h(t-\wt, \wy, z) \, d\wt\, \textup{dvol}_B(\wy),
$$ 
where $G$ lifts to a polyhomogeneous function on the parabolic blowup $[\R^+\times B^2, Y]$,
vanishing to infinite order at the front face of the blowup at $Y$. It remains to study
the contribution by $\phi \beta^*H_\nu$, where by choice of the 
cutoff function $\phi$ we may assume that $h$ is compactly supported in a coordinate chart around $y\in B$,
so that the contribution to $F_\nu(h)$ is given by
$$
F:=c_\nu \int_0^t \int_{\R^b\times F} (\beta^{-1})^*\phi \cdot H_\nu (\wt, x, y, \wy, z, \wz) 
h(t-\wt, \wy, \wz) \, d\wt \, \textup{dvol}_{\partial M}(\wy, \wz).
$$
Rewriting the integrand in projective coordinates around $\ff$, we may separate
out the $\ff$ leading order term, and integrating first in $\wz$ along the fibres
and expanding then $\textup{dvol}_B(\wy)$ around $y$, we obtain

\begin{align*}
F &= F^N_\nu(h) +  c_\nu \int_0^t \int_{\R^b} (\beta^{-1})^*\phi \cdot (\kappa_0 + \kappa_1 + \kappa_2)
(\wt,x,y,\wy)P_\nu h(t-\wt, \wy,z) \, d\wt \, \textup{dvol}_B(\wy) \\
&=: F^N_\nu(h) + F_0 + F_1 + F_2,
\end{align*}
where $\beta^*\kappa_{0,1} = O(\rho_\ff^{-\nu-b-1/2}\rho_\rf^{\nu+1/2}\rho_\tf^{\infty})$ and
$\beta^*\kappa_{2} = O(\rho_\ff^{-\nu-b+1/2}\rho_\rf^{\nu+1/2}\rho_\tf^{\infty})$. Below
we omit $(\beta^{-1})^*\phi$ from notation, and simply assume that the kernels $\kappa_{0,1,2}$
are supported in an open neighborhood of $\ff$.
\medskip

The contribution to $c^\pm(F_\nu(h))$ coming from the first integral $F^N_\nu(h)$
is studied in Proposition \ref{model-F}. It remains to discuss the latter three integrals $F_{0,1,2}$.
Since we expand as $x\to 0$, for fixed $t>0$, we may assume $x^2<t$ and separate for each $j=0,1,2$ 
\begin{align*}
F_j = \int_0^{x^2} + \int_{x^2}^t =: F'_j + F''_j.
\end{align*}
We describe the integral expressions $F'_j$ in the projective coordinates \eqref{coord-left-1}.
Then, writing $\textup{dvol}_B(\wy) = v(\wy) d\wy$ in local coordinates, we find
\begin{equation*}
\begin{split}
F'_j = \int_0^1 \int_{\R^b} G'_j(x,\tau,u,y,z) &P_\nu h (t-x^2\tau, y-xu, z) v(y-xu) \, d\tau \, du
\\ &\times \left\{\begin{split}&x^{-\nu+3/2}, \ j=0,1, \\ &x^{-\nu+5/2}, \ j=2,\end{split}\right.
\end{split}
\end{equation*}
where $G'_j$ is bounded, polyhomogeneous and vanishing to infinite order as $\tau \to 0, |u|\to \infty$.
Expanding $P_\nu h$ and $v$ in Taylor series around $(t,y,z)$, as well as expanding $G'_j$ as $x\to 0$, 
we find no contribution to terms $x^{\pm \nu +1/2}$
and $\sqrt{x}\log(x)$ in the asymptotic expansion as $x\to 0$, unless $\nu=1/2$. 
\medskip

In case $\nu=1/2$, we have $(-\nu+3/2)=\nu+1/2$ 
and hence $F'_j$ contributes $\theta'_j \cdot P_\nu h$
to the coefficient $c^+(F_\nu(h))$ if $j=0,1$, with no contribution to $c^-(F_\nu(h))$.
In case $\nu\neq 1/2$, neither of $F'_j$ contributes to the coefficients $c^\pm(F_\nu(h))$
and we set $\theta'_j=0$.
\medskip

For the analysis of $F''_j, j=0,1,2,$ consider the projective coordinates \eqref{coord-left-2}.
\medskip

\textbf{Contribution from $F''_0$.} Expanding $v(y-\rho u)$ in Taylor series around $y$, 
we find in projective coordinates \eqref{coord-left-2}
\begin{align*}
F''_0 \sim \sum_{k=1}^\infty \frac{(-1)^k}{k!} \int\limits_x^{\sqrt{t}} \int\limits_{\R^b} NE_\nu(1,\xi) & H_{\R^b} (1,u;y)
\rho^{-\nu-1/2} (\rho u)^k v^{(k)} (y)  \\ &\times P_\nu h (t-\rho^2, y-\rho u, z) \, d\rho \, du.
\end{align*}
Note that $uH_{\R^b}(1,u;y) = -\frac{1}{2} \partial_u H_{\R^b}(1,u;y)$ and hence integrating by parts in $u$, we find
\begin{align*}
F''_0 \sim \sum_{k=1}^\infty \frac{(-1)^k}{2k!} \int\limits_x^{\sqrt{t}} 
\int\limits_{\R^b} NE_\nu(1,\xi) &H_{\R^b} (1,u;y) \rho^{-\nu+1/2+k}  v^{(k)} (y)
u^{k-1} \\ &\times (\partial_y P_\nu h) (t-\rho^2, y-\rho u, z) \, d\rho \, du
\end{align*}
In view of the explicit structure $NE_\nu(1,\xi) = C\,\xi^{\nu+1/2} e^{-\xi^2/4}=C\, x^{\nu+1/2}\rho^{-\nu-1/2}
e^{-(x/2\rho)^2}$ for an explicit constant $C=\Gamma(\nu+1)^{-1}2^{-2\nu-1}$, we obtain 
\begin{align*}
F''_0 &= x^{\nu+1/2} \int\limits_x^{\sqrt{t}} \int\limits_{\R^b} \rho^{-2\nu + 1}  G''_0(\rho,u,y,z) 
(\partial_y P_\nu h) (t-\rho^2, y-\rho u, z) e^{-(x/2\rho)^2}\, d\rho \, du \\
&= \sum_{k=0}^\infty \frac{(-1)^k}{4^k k!}x^{\nu+1/2+2k} \int\limits_x^{\sqrt{t}} 
\int\limits_{\R^b} \rho^{-2\nu + 1-2k}  G''_0(\rho,u,y,z) (\partial_y P_\nu h) (t-\rho^2, y-\rho u, z) 
\, d\rho \, du \\ &=:  \sum_{k=0}^\infty F''_{0k},
\end{align*}
We may estimate
$$
\sum_{k=1}^\infty |F''_{0k}| \leq \textup{const} \sum_{k=1}^\infty \frac{x^{\nu+1/2+2k}}{4^k k!}  
\int\limits_x^{\sqrt{t}} \rho^{-2\nu + 1 - 2k} d\rho \leq \textup{const'} \, x^{-\nu+5/2},
$$
and hence this sum does not contribute to $c^\pm(F''_0)$-coefficients. 
It remains to study
\begin{align*}
F''_{00} = x^{\nu+1/2} \int\limits_x^{\sqrt{t}} 
\int\limits_{\R^b} \rho^{-2\nu + 1}  G''_0(\rho,u,y,z) (\partial_y P_\nu h) (t-\rho^2, y-\rho u, z) \, d\rho \, du
= \int\limits_0^{\sqrt{t}} \int\limits_{\R^b} - \int\limits_0^{x} \int\limits_{\R^b},
\end{align*}
using the fact that $\rho^{-2\nu+1}$ is integrable at zero for $\nu\in [0,1)$. 
Here, $G''_0$ is bounded in its components and polyhomogeneous in $\rho$.
Obviously, the latter summand does not contribute to the coefficients of $x^{\pm \nu+1/2}$
and $\sqrt{x}\log(x)$ and hence we finally obtain (we abuse the notation by incorporating the 
$\rho-$factors into the kernel $G''_0$)
\begin{align*}
c^-(F''_0)=0, \quad c^+(F''_0)= G''_0 *_t \partial_y P_\nu h = \int_0^t \int_{B} G''_0(\wt, y,\wy)
\partial_y P_\nu h (t-\wt, \wy, z) \, d\wt \, \textup{dvol}_B(\wy),
\end{align*}
where $G''_0$ lifts to a polyhomogeneous function on the parabolic blowup of
$\R^+\times B^2$ around $Y:=\{(t, y, \wy) \in \R^+\times B^2 \mid  t=0, y=\wy\}$
of leading order $(-2\nu-b)$ at the front face. \medskip

\textbf{Contribution from $F''_1$.} Here a more detailed information 
on the structure of $\kappa_1$ is necessary. Recall \eqref{HJ}, which 
asserts that the second order term in the front face expansion of $\HF$ is given by
$\mathcal{J} + H^{(1)} * \mathcal{L}' H^{(1)}$, where $\mathcal{J} \in \Psi^{3,0,\calE'}_{\eh}(M)$ with
$\calE' = (E_{\lf}, E_{\rf}+1)$, and $H^{(1)} \in \Psi^{2,0,\calE}_{\eh}(M)$ with
$\calE = (E_{\lf}, E_{\rf})$. Consequently we may write
$$
\kappa_1 = \kappa_J  +  H^{(1)} * \partial_y \kappa_L,
$$ 
where $\kappa_J$ lifts to a polyhomogeneous function of $\lf$ 
of leading order $(-1/2-\nu-b)$ at the front face and of order $(\nu+3/2)$ at the right boundary face.
Similarly, $\kappa_L$ lifts to a polyhomogeneous function of $\lf$ 
of leading order $(-3/2-\nu-b)$ at the front face and of order $(\nu+1/2)$ at the right boundary face.
Consequently we may write
\begin{align*}
F''_1
&= c_\nu \int_{x}^{\sqrt{t}} \int_{\R^b} \kappa_J(\rho, \xi, u, y) P_\nu h(t-\rho^2, y-\rho u, z)\,  \rho^{1+b} d\rho \, du \\
&+ c_\nu \int_{x}^{\sqrt{t}} \int_{\R^b} H^{(1)} * (\rho^{-1}\partial_u + \partial_y) \kappa_L (t-\rho^2, x,\rho, u, y) \, 
P_\nu h (t-\rho^2, y-\rho u, z) \rho^{1+b} d\rho\, du,
\end{align*}
where we have neglected the factor $v(\wy)$ in the volume form $\textup{dvol}_B(\wy)=v(\wy) d\wy$.
Integrating by parts in the last integral we arrive by the composition law in Theorem \ref{composition} at the following expression
\begin{align*}
F''_1 
&= x^{\nu+3/2} \int_{x}^{\sqrt{t}} \int_{\R^b} \rho^{-2\nu -1} G_J(\rho, \xi, u, y) P_\nu h(t-\rho^2, y-\rho u, z)\,  d\rho \, du \\
&+ x^{\nu+1/2} \int_{x}^{\sqrt{t}} \int_{\R^b} \rho^{-2\nu +1} G_{L,a}(\rho, \xi, u, y) P_\nu h(t-\rho^2, y-\rho u, z)\,  d\rho \, du \\
&+ x^{\nu+1/2} \int_{x}^{\sqrt{t}} \int_{\R^b} \rho^{-2\nu +1} G_{L,b}(\rho, \xi, u, y) (\partial_y P_\nu h)(t-\rho^2, y-\rho u, z)\,  d\rho \, du,
\end{align*}
where the kernels $G_J, G_{L,a}, G_{L,b}$ are bounded and polyhomogeneous in $\rho$.
\medskip

For the first integral, expanding $P_\nu h$ in Taylor series around $(t,y,z)$, as well as expanding $G_J$ as $\rho\to 0$, 
we find no contribution to terms $x^{\pm \nu +1/2}$ and $\sqrt{x}\log(x)$ in the asymptotic expansion as $x\to 0$, unless $\nu=1/2$. 
In case $\nu=1/2$, we have $(-\nu+3/2)=\nu+1/2$ 
and hence the first integral contributes $\theta''_1 \cdot P_\nu h$
to the coefficient $c^+(F_\nu(h))$, with no contribution to $c^-(F_\nu(h))$.
In case $\nu\neq 1/2$, the first integral does not contribute to the coefficients $c^\pm(F_\nu(h))$
and we set $\theta''_1=0$.
\medskip

For the latter two integrals the discussion is parallel to that of $F''_0$ and we obtain
(we again abuse the notation by incorporating the 
$\rho-$factors into the kernels $G_*$)
\begin{align*}
c^-(F''_1)=0, \quad c^+(F''_1)&= G_{L,a} * P_\nu h + G_{L,b} * \partial_y P_\nu h + \textup{const} \cdot P_\nu h \\
&= \int_0^t \int_B  G_{L,a}(\wt, y,\wy) \, P_\nu h (t-\wt, \wy, z) \, d\wt \, \textup{dvol}_B(\wy) \\
&+ \int_0^t \int_B  G_{L,b}(\wt, y,\wy) \, \partial_y P_\nu h (t-\wt, \wy, z) \, d\wt \, \textup{dvol}_B(\wy) \\
& + \theta''_1 \cdot P_\nu h (t, y, z)
\end{align*}
where $G_{L,a}, G_{L,b}$ lift to a polyhomogeneous function on the parabolic blowup of
$\R^+\times B^2$ around $Y:=\{(t, y, \wy) \in \R^+\times B^2 \mid  t=0, y=\wy\}$
of leading order $(-2\nu-b)$ at the front face. \medskip

\textbf{Contribution from $F''_2$.} As before we obtain by construction
\begin{equation*}
\begin{split}
F'_2 &= c_\nu \int_{x^2}^t \int_{B}\kappa_2(\wt, x,y,\wy) \, P_\nu h(t-\wt, \wy, z) \, 
d\wt \, d\textup{vol}_{B}(\wy) \\ & = x^{\nu+1/2} \int_x^{\sqrt{t}} \int_{\R^b} 
\rho^{-2\nu+1} G''_2(\rho, \xi, u, y) P_\nu h(t-\rho^2, y-\rho u, z)\, v(y-\rho u)\, d\rho \, du 
\end{split}
\end{equation*}
where the kernel $G''_2$ is bounded and polyhomogeneous in $\rho$. 
Its discussion is parallel to that of $F''_0$ and we obtain
(we again abuse the notation by incorporating the 
$\rho-$factors into the kernel $G''_2$)
\begin{align*}
c^-(F''_2)=0, \quad c^+(F''_2)&= G''_2 * P_\nu h = 
\int_0^t \int_B  G''_2 (\wt, y,\wy) \, P_\nu h (t-\wt, \wy, z) \, d\wt \, \textup{dvol}_B(\wy).
\end{align*}
where $G''_2$ lifts to a polyhomogeneous function on the parabolic blowup of
$\R^+\times B^2$ around $Y:=\{(t, y, \wy) \in \R^+\times B^2 \mid  t=0, y=\wy\}$
of leading order $(-2\nu-b)$ at the front face. This proves the statement with $G'_\nu=G_{L,a} + G''_2$, 
$G''_\nu=G''_0 + G_{L,b}$ and $\theta_\nu=\theta'_0 + \theta'_1 + \theta''_1$.
\end{proof}

\section{Heat kernel for algebraic boundary conditions}\label{heat-section}

In this section we finally employ the signaling solution to construct the heat kernel 
for the Hodge Laplacian $\Delta_\Gamma$ with algebraic boundary conditions $\Gamma$. Consider 
the increasing sequence of eigenvalues $\nu_j^2\in [0,1)$ with $j=1,..., q,$ of the tangential 
operator $A$, counted with their multiplicities. 
Consider $\phi\in C^\infty_0(M)$ and put $u=\HF \phi$.
We seek to correct $u$ to satisfy algebraic boundary conditions $\Gamma$, i.e.
\begin{equation}\label{bcg1}
w = u + \sum_{j=1}^q F_{\nu_j}(h_{\nu_j}) \in \mathscr{D}_\Gamma (\Delta).
\end{equation}
where each $h_{\nu_j}$ lies in $\textup{Im}P_{\nu_j}$. 
Recall $\Gamma = (\Gamma_{ij}) \in \textup{Matr}(q,\Lambda_q)$ with 
diagonal entries given by $\Gamma_{jj}=b_{jj} \psi_j^{-} + \theta_{jj} \psi_j^{+}$, 
and the off-diagonal entries $\Gamma_{ij}= \theta_{ij} \psi_j^{+}$. The coefficients  
$b_{ij}, \theta_{ij}\in \R$ are such that either $b_{ii}=1$, or $b_{ii}=0$, where in the
latter case we require $\theta_{ii}=1$ and $\theta_{ij}=0$ for $i\neq j$. 
Here we assume that $b_{jj}=1$ for every $j=1,..,q$,
since in case of $b_{jj}=0$, $h_{\nu_j}=c^+_{\nu_j}(u)$. Then 
\eqref{bcg1} reads as follows
\begin{equation}\label{bcg}
c^+_{\nu_i}(u)+ c^+(F_{\nu_i}(h_{\nu_i})) = 
\sum_{j=1}^q \theta_{ij} h_{\nu_j}, \quad i=1, ... , q.
\end{equation}
We define $q\times q$ matrix valued operators (acting by convolution)
\begin{align*}
G^N &:= (\theta_{ij})_{ij} - \textup{diag}(G^N_{\nu_1}, ... , G^N_{\nu_q}) 
- \textup{diag}(\theta_{\nu_1}, ... , \theta_{\nu_q}). \\
G &:=\textup{diag} (  G'_{\nu_1} + G''_{\nu_1} * \partial_y, ... ,  G'_{\nu_q} + G''_{\nu_q} * \partial_y ).
\end{align*}
We also write 
\begin{align*}
H_\nu \phi &:= \textup{diag}(H_{\nu_1}\phi, .. ,H_{\nu_q}\phi), \\
h_\nu &:= \textup{diag}(h_{\nu_1}, .. ,h_{\nu_q}).
\end{align*}
Then we may rewrite \eqref{bcg} as $H_\nu \phi = (G^N - G) h_\nu$, 
where we note for $u=\HF\phi$ that $c^+_{\nu_i}(u)=H_{\nu_i}\phi$.
We apply on both sides of the relation the Fourier transform in $y\in \R^b$ 
and the Laplace transform in $t\in \R^+$, and obtain
\begin{equation}
\label{eqn}
\begin{split}
\mathscr{L} \circ \mathscr{F} H_\nu \phi &= \mathscr{L} \circ \mathscr{F} 
G^N \cdot \mathscr{L} \circ \mathscr{F} h_\nu 
- \mathscr{L} \circ \mathscr{F} (G h_\nu) \\
&= \mathscr{L} \circ \mathscr{F} G^N \cdot \left( \mathscr{L} \circ \mathscr{F} h_\nu - 
(\mathscr{L} \circ \mathscr{F} G^N)^{-1} \cdot \mathscr{L} \circ \mathscr{F} (G h_\nu)\right)
\\&= \widetilde{G}_N \cdot \left( \mathscr{L} \circ \mathscr{F} h_\nu - 
\widetilde{G}_N^{-1} \cdot \mathscr{L} \circ \mathscr{F} (G h_\nu)\right)
\end{split}
\end{equation}
where we have set
\begin{align*}
\widetilde{G}_N := \mathscr{L} \circ \mathscr{F} G^N = (\theta_{ij})_{ij} - 
\textup{diag}(\mathscr{L} \circ \mathscr{F}G^N_{\nu_1}, ... , \mathscr{L} \circ \mathscr{F}G^N_{\nu_q}) 
- \textup{diag}(\theta_{\nu_1}, ... , \theta_{\nu_q}).
\end{align*}
Recall from Proposition \ref{model-F} that $\widetilde{G}_N$ is a function of $(\zeta + |\w|^2)$. 
Existence and the structure of the inverse matrix $\widetilde{G}_N(\zeta + |\w|^2)^{-1}$
is the core of the subsequent section. Put 
\begin{align*}
D\equiv (D_{ij})_{ij}:=\mathscr{F}^{-1}\circ \mathscr{L}^{-1} \, \widetilde{G}_N^{-1} \, \mathscr{L} \circ \mathscr{F}.
\end{align*}
Then, applying the inverse $\widetilde{G}_N(\zeta + |\w|^2)^{-1}$ on both sides of \eqref{eqn}, we arrive at the following
relation
\begin{align}\label{eqn1}
D H_\nu \phi =  h_\nu - D \circ G h_\nu.
\end{align}
In the setup of isolated conical singularities, the operator $G$ is absent and 
\eqref{eqn1} provides an explicit result for $h_\nu$, obtained in one step by inverting the corresponding matrix. 
Unlike in the case of isolated conical singularities, here $G$ is a non trivial operator on the base manifold
$B$ and the solution is obtained from \eqref{eqn1} by an iterative procedure. We define for any $M\in \N$
\begin{equation}\label{LL}
\begin{split}
h^M_\nu \equiv \left(\begin{array}{c}h^M_{\nu_1}\\ : \\ h^M_{\nu_q} \end{array}\right) &:=
\left[\sum_{k=0}^M (D\circ G)^k \circ D \right] H_\nu \phi \\
&= \left(\sum_{j=1}^q (K^M_\Gamma)_{1j} * H_{\nu_j}\phi, \ \cdots \ , \sum_{j=1}^q 
(K^M_\Gamma)_{qj} * H_{\nu_j}\phi \right)^t,
\end{split}
\end{equation}
where we have introduced $K^M_\Gamma\equiv ((K^M_\Gamma)_{ij})_{ij} := \sum_{k=0}^M (D \circ G)^k \circ D$.
This defines an approximate solution to \eqref{eqn1}, solving it up to an error
\begin{align}\label{eqn2}
D H_\nu \phi =  h^M_\nu - D \circ G h^M_\nu - (D\circ G)^{M+1} D H_\nu \phi.
\end{align}
Below we show that the error term vanishes and $h^M_\nu$ converges
as $M\to \infty$. Anticipating that discussion, we may define $K^\infty_\Gamma\equiv ((K^\infty_\Gamma)_{ij})_{ij} := \sum_{k=0}^\infty (D \circ G)^k \circ D$, and the heat kernel for algebraic boundary conditions
$\Gamma$ is subsequently given by 
\begin{align}
\HG=\HF + \sum_{i,j=1}^q H_{\nu_i} * (K^\infty_\Gamma)_{ij} *H_{\nu_j},
\end{align}
where the kernels are convolved in $t$ and concatenated over $B$.
\medskip

The remainder of the section is concerned with the analysis of the operator
orders in the definition of $h^M$. The integral kernel of $D$ is explicitly given by
($u=(y-\wy)/\sqrt{t}$)
\begin{equation}\label{D}
\begin{split}
D(t,y,\wy) &= -i(2\pi)^{-b-1}\int_{\R^b} \int_{i\R+\delta} e^{-i\w (y-\wy)} e^{t\zeta} 
\widetilde{G}_N^{-1}(\zeta + |\w|^2) d\zeta \, d\w \\
&=  -i(2\pi)^{-b-1}\int_{\R^b} \int_{i\R+\delta} e^{-i\w (y-\wy)} e^{-t|\w|^2} 
e^{st}\widetilde{G}_N^{-1}(s) ds \, d\w \\
&=  -i(2\pi)^{-b-1}(\sqrt{t})^{-b} \mathscr{L}^{-1} \widetilde{G}_N^{-1}(t) \cdot \int_{\R^b} e^{-i w \cdot u} e^{- |w|^2} dw.
\end{split}
\end{equation}
The asymptotics of $ \mathscr{L}^{-1} \widetilde{G}_N^{-1}(t)$ as $t\to 0$ is established
in Theorem \ref{K} below. More precisely, noting the $p\times p$ matrix structure of the 
operators, we obtain for any $(ij) \in \{1,..,q\}^2$
$$
\mathscr{L}^{-1} \widetilde{G}_N^{-1}(t)_{ij} \sim (\sqrt{t})^{-2+2\nu_{ij}}, \quad  t\to 0,
$$
where $\nu_{ij}=\nu_i + \nu_j$ if $i\neq j$ and $\nu_{ij}=\nu_i$ if $i=j$.
We neglect eventual logarithmic terms in the exact $t$-asymptotics of 
$\mathscr{L}^{-1} \widetilde{G}_N^{-1}(t)$ up to the end of this section, since for the argument 
on convergence of $h^M$ as $M\to \infty$, only the leading terms are relevant. Keeping these 
logarithmic terms out of the picture, the component $D_{ij}$ of the matrix-valued kernel $D$ lifts to a
function on the parabolic blowup of
$\R^+\times B^2$ around $Y:=\{(t, y, \wy) \in \R^+\times B^2 \mid  t=0, y=\wy\}$
of leading order $(-2-b+2\nu_{ij})$ at the front face. The projective coordinates on the blowup are given in \eqref{left-coord}. 

\begin{prop}\label{G-composition} 
Let $G_1$ and $G_2$ lift to polyhomogeneous functions on the parabolic blowup of
$\R^+\times B^2$ around $Y:=\{(t, y, \wy) \in \R^+\times B^2 \mid  t=0, y=\wy\}$
of leading orders $(-2-b+\A_1)$ and $(-2-b+\A_2)$ at the front face, respectively. Then
$$
G_1 *_t G_2 (t, y, y') = \int_0^t \int_B G_1(t-\wt, y, \wy) \, G_2(\wt, \wy, y') \, d\wt\, d\textup{vol}_B(\wy),
$$
lifts to a polyhomogeneous function on $[\R^+_{\sqrt{t}}\times B^2,Y]$
of leading order $(-2-b+\A_1+\A_2)$.
\end{prop}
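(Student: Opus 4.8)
The plan is to work in the projective coordinates \eqref{left-coord} on the parabolic blowup $[\R^+_{\sqrt t}\times B^2, Y]$ and track how the convolution $G_1*_tG_2$ behaves near the front face $\{\rho=\sqrt t=0\}$. First I would split the time integral $\int_0^t$ at $t/2$, writing $G_1*_tG_2 = \int_0^{t/2} + \int_{t/2}^t$; by the symmetry of the roles of $G_1$ and $G_2$ it suffices to treat the first piece, where $\wt\le t/2$, so that $t-\wt \asymp t$ and the $G_1$-factor stays away from the front face in the $\wt$-variable while $G_2$ degenerates. On this region I would rescale: set $\rho=\sqrt t$, $\sigma = \sqrt{\wt}/\rho \in [0,1/\sqrt2]$, $u=(y-y')/\rho$ and let $\wu = (\wy - y')/\sqrt{\wt}$ be the projective variable adapted to the front face seen by $G_2$. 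The key point is that in these coordinates the polyhomogeneity hypotheses say $G_2(\wt,\wy,y') = \wt^{(-2-b+\A_2)/2}\,g_2(\sqrt{\wt},\wu,y') = \rho^{-2-b+\A_2}\sigma^{-2-b+\A_2}\,g_2(\rho\sigma,\wu,y')$ with $g_2$ polyhomogeneous and conormal in its arguments, and similarly $G_1(t-\wt,y,\wy) = \rho^{-2-b+\A_1}\,g_1((1-\sigma^2)^{1/2}\rho,\ldots)$ with a smooth (in $\rho$) leading behaviour since $1-\sigma^2$ is bounded away from $0$.

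Next I would push the volume form and the domain of integration through this rescaling. We have $d\wt = 2\rho^2\sigma\,d\sigma$ and $d\textup{vol}_B(\wy) = \wt^{b/2} v(\wy)\, d\wu = \rho^b\sigma^b v(y' + \rho\sigma\wu)\, d\wu$ in local coordinates near $y'$ (the factor $v$ being smooth, $v(y')\neq 0$). Collecting the powers of $\rho$: from $G_1$ we get $\rho^{-2-b+\A_1}$, from $G_2$ we get $\rho^{-2-b+\A_2}$, and from the measure we get $\rho^{2+b}$, for a net $\rho^{-2-b+\A_1+\A_2}$, which is exactly the claimed leading front-face order. The remaining integral $\int_0^{1/\sqrt2}\!\!\int_{\R^b} \sigma^{-2-b+\A_2}\sigma^b\, g_1\, g_2\, v\, d\wu\, d\sigma$ must be shown to converge and to depend polyhomogeneously on $(\rho, u, y')$. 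Convergence at $\sigma=0$ follows because the integrand behaves like $\sigma^{-1+\A_2}$ there (after the $d\wu$-integral converges, which it does by the infinite-order vanishing of the kernels as $|\wu|\to\infty$ encoded in the temporal-diagonal structure), and $\textup{Re}\,\A_2 > 0$ in all cases of interest — here I would invoke that the $\A_i$ arising in the application (namely sums $2\nu_{ij}$ and the orders from Theorem \ref{FG}) are positive, or more precisely that $\A_2 > 0$; at $\sigma = 1/\sqrt 2$ the integrand is manifestly smooth. Polyhomogeneity of the result in $\rho$ then follows by differentiating under the integral sign and expanding $g_1, g_2, v$ in their Taylor/polyhomogeneous series in $\rho$, each term again integrating to something conormal, using the stability of conormality and polyhomogeneity under such parameter integrals (the standard pushforward theorem for polyhomogeneous densities under a $b$-fibration, cf. \cite{Mel:COC}).

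The main obstacle I anticipate is not the leading-order bookkeeping but the need to verify that the iterated structure is genuinely polyhomogeneous rather than merely conormal with the stated bound, i.e.\ controlling the logarithmic terms and the behaviour of the coefficient functions at the corner $Y\cap\{t=0\}$ where the front face meets the original boundary $\{t=0\}$ of $\R^+\times B^2$. This is where one really wants to phrase the argument as an instance of Melrose's pushforward theorem: realize $G_1*_tG_2$ as the pushforward, under the map $[\R^+\times B^3, (\text{appropriate submanifolds})] \to [\R^+\times B^2, Y]$ forgetting the intermediate $B$-factor, of the (lifted, polyhomogeneous) product $G_1(t-\wt,y,\wy)\,G_2(\wt,\wy,y')$ times the lift of the density $d\wt\,d\textup{vol}_B(\wy)$, check that this forgetful map is a $b$-fibration and that the lifted product is polyhomogeneous with index sets whose real parts, face by face, add up correctly, and then read off that the pushforward is polyhomogeneous with leading order $-2-b+\A_1+\A_2$ at the front face. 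The $t/2$-splitting above is exactly the model computation that shows the pushforward theorem's index-set arithmetic yields this exponent with no anomalous extra log unless $\A_1 = \A_2$ lead to a coincidence of indicial exponents — which, if it occurs, merely upgrades the front-face index set by one logarithmic power and does not affect the leading order asserted in the statement.
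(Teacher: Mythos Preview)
Your proposal is correct, but it takes a substantially different route from the paper. The paper's proof is a one-paragraph homogeneity argument: expand each $G_i$ as a finite sum of parabolically homogeneous terms $G_i^k$ (satisfying $G_i^k(c^2t,cy,c\wy)=c^{-2-b+\A_i+k}G_i^k(t,y,\wy)$) plus a remainder of arbitrarily high order, then check directly by the change of variables $\wt\mapsto c^2\wt$, $\wy\mapsto c\wy$ in the convolution that $G_1^k*_tG_2^l$ is homogeneous of degree $-2-b+\A_1+\A_2+k+l$. No splitting, no blowup calculus, no pushforward theorem is invoked.

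Your approach --- splitting at $t/2$, passing to projective coordinates adapted to the inner and outer variables, and ultimately appealing to Melrose's pushforward theorem for $b$-fibrations --- is the more systematic microlocal route. It has the advantage of being genuinely rigorous about polyhomogeneity (controlling index sets and possible log-terms rather than just leading orders), and it makes the convergence condition $\A_2>0$ explicit, which the paper's scaling argument leaves implicit. The paper's argument, on the other hand, is far shorter and exploits the fact that only the leading front-face order is asserted: homogeneity of each summand under the parabolic dilation is immediate, and the remainder estimate follows by the same substitution. For the purposes of this paper --- where only the leading order matters for the subsequent iteration argument and the $\A_i$ are manifestly positive --- the elementary scaling proof suffices; your version would be the right template if one needed the full index set of the composition.
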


\begin{proof}
By polyhomogeneity of $G_{1,2}$ we may write for $i=1,2$ and any $N\in \N, c>0$
\begin{equation*}
G_i (t, y, \wy) = \sum_{k=0}^{N-1} G^k_i(t,y,\wy) + \widetilde{G}^N_i (t, y, \wy), \
\left\{ \begin{split}
&G^k_i(c^2 t, cy, c\wy) = c^{-2-b+\A_1+k} G^k_i (t, y, \wy), \\
&\widetilde{G}^N_i (c^2 t, cy, c\wy) = O(c^{-2-b+\A_1+N}), \ c \to 0.
\end{split} \right.
\end{equation*}
For a composition of individual homogeneous summands we obtain
\begin{align*}
G^k_1 *_t G^l_2 (c^2t, cy, cy') &= \int_0^{c^2t} 
\int_B G^k_1(c^2t-\wt, cy, \wy) \, G^l_2(\wt, \wy, cy') \, d\wt\, d\textup{vol}_B(\wy) \\
&= c^{2+b} \int_0^{t} \int_B G^k_1(c^2t-c^2\wt, cy, c\wy) \, G^l_2(c^2\wt, c\wy, cy') \, d\wt\, d\textup{vol}_B(\wy)\\
&= c^{-2-b+\A_1+\A_2+k+l} G^k_1 *_t G^l_2 (t, y, y').
\end{align*}
Similar estimates for the remainder terms $\widetilde{G}^N_{1,2}$ prove the statement.
\end{proof}

The statement on the leading orders in Proposition \ref{G-composition} 
above holds also without the assumption of polyhomogeneity. Moreover, 
presence of $\partial_y$ derivatives is not excluded from that picture, 
since $\partial_y$ corresponds to lowering the leading order at the front 
face of $[\R^+\times B^2, Y]$ by one. Consequently, $G$ is zero off diagonal and its 
diagonal components $G_{jj} = (G'_{\nu_j}+ G''_{\nu_j} * \partial_y)$ may be viewed as of leading 
order $(-1-b+2\nu_j)$ at the front face, when lifted to $[\R^+ \times B^2, Y]$. 
\medskip

Hence, by Proposition \ref{G-composition}, we find that $(D\circ G)_{ij} = D_{ij} \circ G_{jj}$
lifts to a polyhomogeneous distribution on $[\R^+ \times B^2, Y]$ of leading order 
$(-2-b+\A_{ij})$, where $\A_{ij}:=1+2(\nu_{ij}-\nu_j) \geq 1$. Due to $\A_{ij} \geq 1$, 
subsequent $M$-fold compositions improve the order and hence $h^M_\nu$ converges and
the error in \eqref{eqn2} vanishes as $M \to \infty$.

\section{Heat-trace asymptotic expansions}\label{trace-section}

In view of \eqref{D}, we begin with studying $\widetilde{G}_N^{-1}$ and its inverse Laplace transform.
Fix the branch of logarithm in $\C \backslash \R^-$ for the definition of powers 
of complex numbers. We apply the inversion rule using adjuncts
\begin{align}
(\widetilde{G}_N^{-1})_{ij} = (-1)^{i+j}\frac{(\textup{adj}\widetilde{G}_N)_{ij}(\zeta)}{\det \widetilde{G}_N(\zeta)},
\end{align}
where $(\textup{adj}\widetilde{G}_N)_{ij}(\zeta)$ is the determinant of the reduced matrix, obtained 
from $\widetilde{G}_N$ by deleting the $i$-th row and $j$-th column. 
Let $\{\nu_j\}_{j=1}^q$ be ordered in the ascending order and $p\leq q$ denote the multiplicity 
of zero in $\textup{Spec}(A)$, so that $\nu_1=..=\nu_p=0$. Write $\theta_j:= \theta_{\nu_j}-\theta_{jj}, j=1,..,p$.
Set $\kappa_\theta := 2 (\gamma - \log 2 + \theta)$ 
and introduce the multi-index notation, setting for $\A=(\A_1,...,\A_p) \in \Z^p$ and $\beta = (\beta_{p+1}, ..,\beta_q)\in \Z^{q-p}$
$$
(\log\zeta + \kappa_{\theta})^{\A}= 
\prod\limits_{k=1}^p (\log\zeta + \kappa_{\theta_k})^{\A_k}, \quad 
\zeta_{\nu}^\beta = \prod\limits_{k=p+1}^q \zeta^{\beta_k \nu_k}\footnote{We set $\zeta^\beta_\nu=1$ in case $p=q$.}.
$$
Let tuples with each entry given by $1$, be denoted by $\one$. 
Then in view of the explicit formulas in Proposition \ref{model-F} we obtain
\begin{equation*}
\det \widetilde{G}_N(\zeta) = C \left((\log \zeta + \kappa_{\theta}) \, \zeta_\nu \right)^\one
\left(1+ \sum_{| \A|=0}^p \sum_{|\beta|=0}^{q-p} C_{\A \beta} (\log\zeta + \kappa_{\theta})^{-\A} \zeta_\nu^{-\beta}\right),
\end{equation*}
for certain coefficients $C, C_{\A \beta}$. The summation above excludes $|\A|=|\beta|=0$.
For $|\zeta| \gg 0$ we may expand $(\det \widetilde{G}_N(\zeta))^{-1}$ in Neumann series
and obtain 
\begin{equation*}
(\det \widetilde{G}_N(\zeta))^{-1} = C^{-1}\left( (\log \zeta + \kappa_{\theta}) \, \zeta_\nu \right)^{-\one} 
\left(1+ \sum_{|\A| + |\beta|=1}^\infty D_{\A \beta} (\log\zeta + \kappa_{\theta})^{-\A} \zeta_\nu^{-\beta}\right),
\end{equation*}
which is convergent for $|\zeta|\gg 0$ and is in particular an asymptotic series as $|\zeta| \to \infty$. Similar computations hold 
for $(\textup{adj}F)_{ij}(\zeta)$. Hence overall we obtain, cf.
(\cite{Moo:THK}, Lemma 8.6)

\begin{prop}\label{LK} There exist constants $A_{ij}$ and $B_{\A \beta}$ such that for $|\zeta| \gg 0$
\begin{align*}
(\widetilde{G}_N^{-1})_{ij}(\zeta) &= 
A_{ij}\left(1+ \sum_{|\A|+|\beta|=1}^\infty B_{\A \beta} (\log\zeta + \kappa_{\theta})^{-\A} \zeta^{-\beta}_\nu\right) \\
&\times  \left\{
\begin{array}{ll}
(\log \zeta +\kappa_i)^{-1} (\log \zeta +\kappa_j)^{-1}, \ &\textup{if} \ \nu_i=\nu_j=0, i\neq j, \\
(\log \zeta +\kappa_i)^{-1} \zeta^{-\nu_j}, \ &\textup{if} \ \nu_i=0, \nu_j\neq 0, \\
(\log \zeta +\kappa_i)^{-1}, \ &\textup{if} \ \nu_i=\nu_j=0, i=j, \\
\zeta^{-\nu_i-\nu_j}, \ &\textup{if} \ \nu_i, \nu_j \neq 0, i\neq j, \\
\zeta^{-\nu_i}, \ &\textup{if} \ \nu_i=\nu_j\neq 0, i=j.
\end{array}\right.
\end{align*}
\end{prop}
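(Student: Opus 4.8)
The plan is to derive the stated asymptotic form of $(\widetilde{G}_N^{-1})_{ij}(\zeta)$ directly from the adjugate formula
$(\widetilde{G}_N^{-1})_{ij} = (-1)^{i+j}(\textup{adj}\,\widetilde{G}_N)_{ij}/\det\widetilde{G}_N$,
combining the expansion of $(\det\widetilde{G}_N(\zeta))^{-1}$ that was just established with the analogous expansion of the cofactor $(\textup{adj}\,\widetilde{G}_N)_{ij}(\zeta)$. The key structural input is the explicit form of $\widetilde{G}_N$ from Proposition \ref{model-F}: it is the $p\times p$ matrix $(\theta_{ij}) - \textup{diag}(\mathscr{L}\mathscr{F}G^N_{\nu_k}) - \textup{diag}(\theta_{\nu_k})$, so each diagonal entry behaves like $(\log\zeta + \kappa_{\theta_k})$ when $\nu_k=0$ and like $\zeta^{\nu_k}$ (up to a nonzero constant) when $\nu_k\ne 0$, while the off-diagonal entries $\theta_{ij}$ are constants of order $O(1)$. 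Thus to leading order $\widetilde{G}_N(\zeta)$ is diagonal with a known large-$\zeta$ growth rate in each slot, and all sub-leading contributions are genuinely smaller as $|\zeta|\to\infty$.

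First I would record the large-$\zeta$ growth order of each entry of $\widetilde{G}_N$: entry $(k,k)$ grows like $\zeta^{\nu_k}$ if $\nu_k\ne 0$ and like $\log\zeta$ if $\nu_k=0$, and entry $(i,j)$ for $i\ne j$ is $O(1)$. From this one computes the leading growth of $\det\widetilde{G}_N(\zeta)$ — it is governed by the product of the diagonal entries, namely $C\,((\log\zeta+\kappa_\theta)\,\zeta_\nu)^{\one}$ — which is exactly the factored expansion already displayed in the excerpt, and hence of its reciprocal by the Neumann-series argument. Next I would do the same bookkeeping for the reduced matrix obtained by deleting row $i$ and column $j$. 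Its determinant is again dominated by a product of diagonal-type entries, but now over an index set missing one row and one column; expanding the determinant along its "diagonal skeleton" one sees that the leading term is the product $\prod_{k\ne i,j}(\text{growth in slot }k)$ times the appropriate off-diagonal constant factors that bridge the mismatch between the deleted row index $i$ and column index $j$. Dividing this by $\det\widetilde{G}_N$ cancels the product over all $k\ne i,j$ and leaves precisely the slot-$i$ and slot-$j$ growth factors in the denominator, which reproduces the five cases: $(\log\zeta+\kappa_i)^{-1}(\log\zeta+\kappa_j)^{-1}$, $(\log\zeta+\kappa_i)^{-1}\zeta^{-\nu_j}$, $(\log\zeta+\kappa_i)^{-1}$, $\zeta^{-\nu_i-\nu_j}$, and $\zeta^{-\nu_i}$, according to whether $\nu_i,\nu_j$ vanish and whether $i=j$. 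The correction factor $\bigl(1+\sum_{|\A|,|\beta|\ge 1} B_{\A\beta}(\log\zeta+\kappa_\theta)^{-\A}\zeta^{-\beta}\bigr)$ then collects all the ratios of sub-leading to leading terms coming from both numerator and denominator; that it has this exact multi-index shape follows from the fact that every entry of $\widetilde{G}_N$ is, after dividing out its leading monomial, a polyhomogeneous function of $\zeta$ whose correction terms are built from the two elementary decaying blocks $(\log\zeta+\kappa_\theta)^{-1}$ and $\zeta^{-\nu_k}$, and Neumann inversion preserves this algebra.

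The main obstacle I anticipate is the combinatorics of the cofactor expansion when the deleted row and column indices differ, particularly tracking how off-diagonal constant entries $\theta_{ij}$ enter the leading term of the reduced determinant and verifying that the resulting power of $\zeta$ (and of $\log\zeta$) in the $i\ne j$ cases is exactly $\nu_i+\nu_j$ rather than something larger — one must check that no near-diagonal permutation in the reduced matrix produces a competing term of equal or greater growth, which uses crucially that the off-diagonal entries do not grow. A secondary technical point is making the "expansion along the diagonal skeleton" rigorous: one should argue that among all permutations contributing to a $(p-1)\times(p-1)$ determinant, the identity-like permutation (adjusted minimally to account for the row/column shift) strictly dominates in growth order, and that all others contribute only to the sub-leading correction series. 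Once these two points are settled, assembling the stated formula — including identifying the constants $A_{ij}$ with the appropriate ratios of the leading coefficients and the $\theta$-factors — is a routine bookkeeping exercise, entirely parallel to \cite[Lemma 8.6]{Moo:THK}.
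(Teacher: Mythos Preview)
Your proposal is correct and follows essentially the same approach as the paper: the paper also uses the adjugate formula, expands $\det\widetilde{G}_N(\zeta)$ and its reciprocal via a Neumann series exactly as in the displayed formulae preceding the proposition, and then simply states that ``similar computations hold for $(\textup{adj}\,\widetilde{G}_N)_{ij}(\zeta)$'' before invoking \cite[Lemma~8.6]{Moo:THK}. Your outline in fact supplies more detail on the cofactor combinatorics than the paper itself, which defers the bookkeeping entirely to the cited reference.
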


We point out that no $(\log \zeta)$ factors arise if $\Delta_\Gamma$ coincides with the 
Friedrichs extension of the Hodge Laplacian when restricted to the zero eigenspace of $A$.
This case exhibits only a classical expansion without 
the \emph{exotic} phenomena and we do not consider it here. We now want to 
derive an asymptotic expansion for $\mathscr{L}^{-1} (\mathscr{T}_{\A,\beta}) (t)$ with
\begin{align*}
\mathscr{T}_{\A,\beta} (\zeta)
:= (\log\zeta + \kappa_{\theta})^{-\A} \zeta_\nu^{-\beta}.
\end{align*}

\begin{thm}\label{K}
Write $\nu_\beta:=\beta_{p+1}\nu_{p+1} + .. + \beta_q\nu_q$ for the given $\beta\in \N^{q-p}$.
We set $\nu_\beta=0$ in case $p=q$.
For $\A\in \N^p$ and $\beta\in \N^{q-p}$ we then have up to smooth additive components
\begin{equation*}
\mathscr{L}^{-1} (\mathscr{T}_{\A,\beta}) (t) \sim \sum_{k=0}^\infty 
E_{\beta k}\, t^{-1+\nu_\beta} \log^{-|\A|-k}(t), \ t\to 0.
\end{equation*}
\end{thm}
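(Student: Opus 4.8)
The plan is to evaluate $\mathscr{L}^{-1}(\mathscr{T}_{\A,\beta})(t)$ by a contour deformation, exploiting the branch cut of $\log\zeta$ and of the fractional powers $\zeta^{\nu_k}$ along $\R^-$, exactly in the spirit of the analogous computation for cones in (\cite{Moo:THK}, Lemma 8.6). First I would recall that $e^{t\zeta}$ decays as $\textup{Re}(\zeta)\to-\infty$ for $t>0$, so the vertical contour $\{\textup{Re}(\zeta)=\delta\}$ can be pushed to a Hankel-type keyhole contour $\mathscr{H}$ wrapping the negative real axis (together with a small circle around the origin). Since $(\log\zeta+\kappa_\theta)^{-\A}\zeta_\nu^{-\beta}$ is analytic in $\C\backslash\R^-$ and decays at infinity, the integral over the large arcs vanishes and we are left with the contributions of the two sides of the cut plus the small circle around $0$; the small-circle piece and any entire remainder contribute only a smooth function of $t$, which is why the statement is modulo smooth additive components.

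Next I would substitute $\zeta=-r/t$ (equivalently rescale by $t$) to extract the overall power of $t$. Writing $\zeta=e^{\pm i\pi}r/t$ on the two lips of the cut, the factor $\zeta_\nu^{-\beta}$ produces $t^{\nu_\beta}r^{-\nu_\beta}e^{\mp i\pi\nu_\beta}$ and the factor $e^{t\zeta}$ becomes $e^{-r}$; together with the $d\zeta=\mp(1/t)\,dr$ this gives the prefactor $t^{-1+\nu_\beta}$ in front of an integral over $r\in(0,\infty)$ of $e^{-r}$ times $(\log r - \log t \pm i\pi + \kappa_\theta)^{-\A}$. Here is the key algebraic point: since $t\to0^+$, $-\log t\to+\infty$, so $\log r\pm i\pi+\kappa_\theta$ is a bounded perturbation of the large quantity $-\log t$, and I expand
\begin{equation*}
(\log r - \log t \pm i\pi + \kappa_\theta)^{-\A}
= (-\log t)^{-|\A|}\Big(1 + \tfrac{\log r \pm i\pi + \kappa_\theta}{-\log t}\Big)^{-\A}
\sim (-\log t)^{-|\A|}\sum_{k\ge0}\frac{c_k(r)}{(-\log t)^{k}},
\end{equation*}
where each $c_k(r)$ is a polynomial in $\log r$ (and in the constants $i\pi,\kappa_\theta$) of bounded degree. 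Integrating $e^{-r}c_k(r)\,dr$ against $r^{-\nu_\beta}$ over $(0,\infty)$ yields finite constants (derivatives of the Gamma function at $1-\nu_\beta$, which is positive), and the two lips combine — with the $e^{\mp i\pi\nu_\beta}$ and the orientation — into a single real constant $E_{\beta k}$. Collecting powers of $(-\log t)^{-1}=\pm\log^{-1}(t)$ and absorbing signs into the constants gives the claimed asymptotic series $\sum_k E_{\beta k}\,t^{-1+\nu_\beta}\log^{-|\A|-k}(t)$.

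The main obstacle I anticipate is making the asymptotic expansion rigorous rather than merely formal: one must justify that the termwise-integrated series is genuinely an asymptotic expansion of the contour integral as $t\to0$, i.e. control the remainder after truncating the binomial expansion of $(1+\cdots/(-\log t))^{-\A}$ uniformly in $r$ on $(0,\infty)$ against the Gaussian/exponential weight $e^{-r}$, and separately estimate the small-circle contribution and the contribution near $r\sim1$ where $\log r - \log t$ is not yet large for moderate $t$ — this last point is handled by splitting the $r$-integral at, say, $r=\sqrt{-\log t}$ and noting the tail is exponentially small. One also has to check that when some $\nu_k$ are irrational or coincide, the powers $t^{\nu_\beta}$ still organize into the stated single series indexed by $\beta$; this is immediate since $\nu_\beta$ is fixed once $\beta$ is fixed. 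Finally, the $\zeta^{-\beta}$ in the theorem statement should be read as $\zeta_\nu^{-\beta}$ as in the displayed integral, and I would remark on this notational identification at the start of the proof.
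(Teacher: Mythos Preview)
Your overall strategy---deform the Bromwich line to the branch cut along $\R^-$, rescale by $t$, and expand $(\log\zeta-\log t+\kappa_\theta)^{-\A}$ in inverse powers of $\log t$---is essentially the route the paper takes (the paper reaches the negative real axis via the imaginary axis in two steps rather than by a single Hankel deformation, but that is cosmetic). There is, however, a genuine gap at the step where you write ``Integrating $e^{-r}c_k(r)\,dr$ against $r^{-\nu_\beta}$ over $(0,\infty)$ yields finite constants (derivatives of the Gamma function at $1-\nu_\beta$, which is positive)''. The quantity $\nu_\beta=\sum_{k>q}\beta_k\nu_k$ is \emph{not} bounded by $1$: each $\nu_k\in(0,1)$ but the multi-index entries $\beta_k$ are unbounded, and already the leading terms in Proposition~\ref{LK} produce exponents such as $\nu_i+\nu_j$ which may exceed $1$. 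When $\nu_\beta\geq 1$ the integral $\int_0^\infty e^{-r}r^{-\nu_\beta}(\log r)^m\,dr$ diverges at $r=0$ and $\Gamma(1-\nu_\beta)$ is not defined; so your termwise integration fails as written, and the separation ``lips $+$ small circle, with the small circle smooth'' does not by itself regularise this.

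The paper addresses precisely this point. After reaching the representation $\int_1^\infty e^{-ty}\,y^{-\nu_\beta}(\log y\pm i\pi+\kappa_\theta)^{-\rho}\,dy$ it differentiates $[\nu_\beta]$ times in $t$, which brings down $y^{[\nu_\beta]}$ and makes the exponent $-\nu_\beta+[\nu_\beta]\in(-1,0]$ integrable at the origin; only then does it extend the integral to $\int_0^\infty$ (at the cost of a smooth error), substitute $x=ty$, and expand in $\log^{-1}(t)$. The resulting asymptotic series is then integrated back $[\nu_\beta]$ times using the elementary identity $\int_0^t\tau^M\log^{-N}(\tau)\,d\tau=(M+1)^{-1}t^{M+1}\log^{-N}(t)+O(t^{M+1}\log^{-N-1}(t))$. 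If you prefer to stay with the Hankel contour, the correct fix is to keep the \emph{full} contour after rescaling rather than pass to $\int_0^\infty$ on each lip, and to recognise the coefficient integrals as $\frac{1}{2\pi i}\int_{\mathscr{H}}e^{\zeta}\zeta^{-\nu_\beta}(\log\zeta)^m\,d\zeta=\partial_s^m\bigl(1/\Gamma(s)\bigr)\big|_{s=\nu_\beta}$, which are finite for all $\nu_\beta$; but that is not what your proposal says.
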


\begin{proof}
As the first step we deform the integration region $(i\R+\delta)$ to 
$\mu$, concatenated out of three parts, $\mu_1=i(-\infty, 1]$, 
the half circle $\mu_2=\{\zeta \in \C \mid |\zeta|=1, \textup{Re} (\zeta) \geq 0\}$ oriented
counterclockwise, and $\mu_3=i[1,\infty)$. The change of the 
integration contour is possible, since for some constant $C>0$
\begin{align*}
\left| \, \frac{1}{2\pi i} \int_0^\delta e^{t(x \pm iR)} \mathscr{T}_{\A, \beta} (i R+x) \, dx \,\right| 
\leq C(\log R)^{-|\A|} R^{-\nu_\beta} \to 0, \ \textup{as} \ R\to \infty.
\end{align*}
Integration over $\mu_2$ leads to a function 
$\mathscr{L}^{-1} (\mathscr{T}_{\A, \beta})_{\mu_2} \in C^\infty[0,\infty)$, 
so that we may write
\begin{align*}
\mathscr{L}^{-1} (\mathscr{T}_{\A, \beta}) (t) &=\mathscr{L}^{-1} (\mathscr{T}_{\A, \beta})_{\mu_2}  + 
\frac{1}{2\pi i} \int_ {\mu_1 \cup \mu_3} e^{t\zeta}  \mathscr{T}_{\A, \beta} (\zeta) \, d\zeta \\
&= \mathscr{L}^{-1} (\mathscr{T}_{\A, \beta})_{\mu_2}  + 
\frac{1}{2\pi} \int_1^\infty e^{i t x}  \mathscr{T}_{\A, \beta} (ix) \, dx 
- \frac{1}{2\pi} \int_1^\infty e^{-i t x}  \mathscr{T}_{\A, \beta} (-ix) \, dx .
\end{align*}
For the second summand we rotate the integration contour to $i[1,\infty)$. 
For the third summand we rotate the integration contour to $(-i[1,\infty))$. 
We make the argument explicit for the second summand.
Let  $R>1$ and the contour $\eta_R := \{R \exp(i\phi) \mid \phi \in [0,\pi/2]\}$ be oriented counterclockwise. 

\begin{figure}[h]
\begin{center}
\begin{tikzpicture}
\draw (-2,0) -- (2,0);
\draw (0,-1) -- (0,2);

\draw (0.5,0) -- (2,0);
\draw (1.5,0) .. controls (1.5,0.8) and (0.8,1.5) .. (0,1.5);

\draw (0,1.5) -- (0.3,1.55);
\draw (0,1.5) -- (0.25,1.4);

\node at (1.5,-0.5) {$R$};
\node at (1.5,1.3) {$\eta_R$};

\end{tikzpicture}
\end{center}
\label{contour}
\caption{The integration contour $\eta_R$.}
\end{figure}
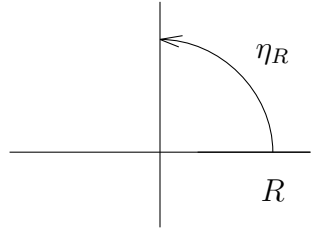

We use the $O$-notation for the asymptotics as $R\to \infty$.
Then, substituting $x=R \exp(i\phi)$, we find for some $C,C'>0$
\begin{align*}
\mid\int_{\eta_R}  e^{itx} \mathscr{T}_{\A, \beta}(ix) \, dx \mid 
 &= \, \mid R \int_0^{\pi/2}  \exp{(-t R\sin\phi + i(\phi + t R\cos\phi))}
\mathscr{T}_{\A, \beta} (iR e^{i\phi}) \, d\phi   \mid \\
& \leq C\, R \, (\log R)^{-|\A|} R^{-\nu_\beta} \int_0^{\pi/2} \exp(-t R\sin\phi) \, d\phi \\
& = C\, R \, (\log R)^{-|\A|} R^{-\nu_\beta} \int_0^{\pi/4} \exp(-t R\sin\phi) \, d\phi + O(R^{-\infty}) 
\end{align*}
\begin{align*}
& \leq C'\, R \, (\log R)^{-|\A|} R^{-\nu_\beta} \int_0^{\pi/4} \exp(-t R\sin\phi) \, \cos(\phi)\, d\phi +
 O(R^{-\infty}) \\ & = C'\, R \, (\log R)^{-|\A|} R^{-\nu_\beta} \int_0^{\sin \pi /4} \exp(-t R y) \, dy  
+ O(R^{-\infty}) \\ &= O\left((\log R)^{-|\A|} R^{-\nu_\beta}\right)  \to 0, \ \textup{as} \ R\to \infty,
\end{align*}
where in the fourth line we have used the fact that $\cos \phi$ is bounded 
from below for $\phi\in [0,\pi/4]$, and in the fifth line we substituted $y=\sin \phi$.
Subsequently, writing $\overline{\eta_1}=\{\overline{z} \mid z\in \eta_1\}$ for the 
clockwise oriented contour, we find
\begin{align*}
\mathscr{L}^{-1} (\mathscr{T}_{\A, \beta}) (t) &=\mathscr{L}^{-1} (\mathscr{T}_{\A, \beta})_{\mu_2}
+ \frac{1}{2\pi} \int_{\eta_1} e^{i t x}  \mathscr{T}_{\A, \beta} (ix) \, dx
 - \frac{1}{2\pi} \int_{\overline{\eta_1}} e^{-i t x}  \mathscr{T}_{\A, \beta} (-ix) \, dx \\
&+ \frac{i}{2\pi} \int_1^\infty e^{-ty} \mathscr{T}_{\A, \beta} (-y)_+ \, dy
- \frac{i}{2\pi} \int_1^\infty e^{-ty} \mathscr{T}_{\A, \beta} (-y)_- \, dy,
\end{align*}
where the subindex $\pm$ indicates $\log(-y)=\log y \pm i\pi$, respectively, in the definition of 
$\mathscr{T}_{\A, \beta} (-y)$, and the first three summands define smooth functions $C^\infty[0,\infty)$.
\medskip

We continue under equivalence up to smooth functions. Then 
without writing out the smooth summands we obtain
\begin{align*}
\mathscr{L}^{-1} (\mathscr{T}_{\A, \beta}) (t) &= \frac{i}{2\pi}
 e^{-i\pi \nu_\beta} \int_1^\infty e^{-ty}  y^{-\nu_\beta}  (\log y + i\pi + \kappa_{\theta})^{-\A} dy \\
& - \frac{i}{2\pi}   e^{i\pi \nu_\beta} \int_1^\infty e^{-ty}  y^{-\nu_\beta}  (\log y - i\pi + \kappa_{\theta})^{-\A} dy
\\ &=: \frac{i}{2\pi} ( e^{-i\pi \nu_\beta}\mathscr{L}^+_{\beta}(t) -  e^{i\pi \nu_\beta}\mathscr{L}^-_{\beta}(t)).
\end{align*}

We establish an asymptotic expansion for each $\mathscr{L}^\pm_{\beta }(t)$ as $t\to 0$. 
For this we differentiate $[\nu_\beta]$ times\footnote{For any $r\geq 0$, [r] denotes the largest integer $\leq r$.} 
in $t$ and obtain by a change of variables $x=ty$
\begin{align*}
\frac{d^{[\nu_\beta]}}{dt^{[\nu_\beta]}}\mathscr{L}^\pm_{\beta}(t) &
=   (-1)^{[\nu_\beta]}\, \int_1^\infty e^{-ty}  y^{-\nu_\beta+ [\nu_\beta]}  (\log y \pm i\pi + \kappa_{\theta})^{-\A} dy \\ &=  (-1)^{[\nu_\beta]}\, \int_0^\infty e^{-ty}  y^{-\nu_\beta+ [\nu_\beta]}  (\log y \pm i\pi + \kappa_{\theta})^{-\A} dy + \textup{smooth}\\
&=  (-1)^{[\nu_\beta]}\, \int_0^\infty e^{-x}  x^{-\nu_\beta+ [\nu_\beta]}  
\left(\frac{\log (x)\pm i\pi + \kappa_{\theta}}{\log (t)} -1\right)^{-\A} dx \\ & \times t^{-1+\nu_\beta -[\nu_\beta] } \log^{-|\A|}(t) + \textup{smooth}
\end{align*}
Since $(1-r)^{-1}=\sum_{k=0}^M r^k + r^{M+1}(1-r)^{-1}$ for any $M\in \N$ and $r\neq 1$, we find, up to smooth additive components
\begin{align*}
\frac{d^{[\nu_\beta]}}{dt^{[\nu_\beta]}}\mathscr{L}^\pm_{\beta}(t) &\sim 
t^{-1+\nu_\beta -[\nu_\beta] } \log^{-|\A|}(t) \sum_{k=0}^\infty   D^\pm_{\beta k} \log^{-k}(t), \ t\to 0.
\end{align*}
In order to integrate the asymptotic series $[\nu_\beta]$ times, note for any $M>-1 $ and $N\in \N$
\begin{align*}
\int_0^t \tau^M \log^{-N}(\tau) \, d\tau &= \int_0^t (M+1)^{-1}(\tau^{M+1}\log^{-N}(\tau))' \, d\tau + 
\int_0^t \frac{N}{M+1} \tau^M \log^{-N-1}(\tau)\, d\tau \\ &= (M+1)^{-1}t^{M+1}\log^{-N}(t) 
+ O(t^{M+1}\log^{-N-1}(t)), \ t\to 0.
\end{align*}
In case $\nu_\beta\in \N$, we also need to consider the case of $M=-1$ and note for $t_0>t$
\begin{align*}
\int_t^{t_0} \tau^{-1} \log^{-N}(\tau) \, d\tau = \frac{\log^{-N-1}(t) - \log^{-N-1}(t_0)}{N+1}.
\end{align*}
Iterating this argument we finally arrive, up to smooth additive components, at the asymptotic expansion
\begin{align*}
\mathscr{L}^\pm_{\beta}(t) &\sim 
t^{-1+\nu_\beta} \log^{-|\A|}(t) \sum_{k=0}^\infty   E^\pm_{\beta k} \log^{-k}(t), \ t\to 0.
\end{align*}
This proves the statement.
\end{proof}

Let $K$ lift to polyhomogeneous function on the parabolic blowup of
$\R^+\times B^2$ around $Y:=\{(t, y, \wy) \in \R^+\times B^2 \mid  t=0, y=\wy\}$
of leading order $(-2-b+\gamma)$ at the front face. By an ad verbatim extension of Proposition \ref{G-composition}, 
compare also (\cite{Moo:THK}, Proposition 8.7), we find that the convolution
$ H_{\nu_i} * K * H_{\nu_j} $ lifts to a polyhomogeneous distribution
on the parabolic blowup space $\mathscr{M}^2_h$, 
of leading order $(-1-b-(\nu_i+\nu_j) + \gamma)$ at the front face,
which vanishes to infinite order at $\tf$ and $\td$ and is of leading order $(\nu_i+1/2), (\nu_j+1/2)$ at $\lf, \rf$, 
respectively. By standard pushforward arguments, cf. \cite[Section 4]{MazVer:ATM}, we obtain
\begin{align}\label{trace}
\textup{Tr} \, H_{\nu_i} * K * H_{\nu_j} (t) 
\sim \sqrt{t}^{\ \gamma - (\nu_i+\nu_j) } 
\sum_{k=0}^\infty d_k \, \sqrt{t}^{\ k}, \ \textup{as} \ t\to 0.
\end{align}
Combining Theorem \ref{K} with \eqref{trace}, we obtain the following

\begin{prop}\label{GHT}
Let $K$ lift to polyhomogeneous function on the parabolic blowup of
$\R^+\times B^2$ around $Y:=\{(t, y, \wy) \in \R^+\times B^2 \mid  t=0, y=\wy\}$
of leading order $(-2-b+\gamma)$. Write $\nu_\beta:=\beta_{p+1}\nu_{p+1} + .. + \beta_q\nu_q$ for any given $\beta\in \N^{q-p}$. For $\A\in \N^p$ and $\beta\in \N^{q-p}$ we put $\mathscr{T}_{\A,\beta}=
(\log\zeta + \kappa_{\theta})^{-\A} \zeta_\nu^{-\beta}$. Then as $t\to 0$ we obtain
\begin{align*}
\textup{Tr} \, \Big(H_{\nu_i} * \mathscr{L}^{-1} (\mathscr{T}_{\A,\beta}) * K * H_{\nu_j}\Big) (t) 
\sim \sqrt{t}^{\ \gamma - (\nu_i+\nu_j) + 2\nu_\beta} 
\sum_{k,l=0}^\infty d'_k \, \sqrt{t}^{\ k} \log^{-|\A|-l}(t).
\end{align*}
\end{prop}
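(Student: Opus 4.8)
The plan is to assemble the polyhomogeneity of the full Schwartz kernel $H_{\nu_i} * \mathscr{L}^{-1}(\mathscr{T}_{\A,\beta}) * K * H_{\nu_j}$ on the relevant parabolic blowup, keeping track both of the leading power and of the logarithmic factors at the front face, and then to push forward to the trace exactly as in \eqref{trace}.

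First I would record the structure of $\mathscr{L}^{-1}(\mathscr{T}_{\A,\beta})$ as a convolution operator over $B$. As for $D$ in \eqref{D}, and in parallel to the relation $G^N_\nu(t,y-\wy)=H_{\R^b}(t,y-\wy)\,\mathscr{L}^{-1}\widetilde{G}^N_\nu(t)$ from Proposition \ref{model-F}, its kernel is $\mathscr{L}^{-1}(\mathscr{T}_{\A,\beta})(t)\cdot H_{\R^b}(t,y-\wy)$ up to a lower order contribution from the volume form. By Theorem \ref{K} we have $\mathscr{L}^{-1}(\mathscr{T}_{\A,\beta})(t)\sim\sum_{k\geq 0}E_{\beta k}\,t^{-1+\nu_\beta}\log^{-|\A|-k}(t)$ as $t\to 0$, modulo a smooth remainder which only adds a smooth, hence negligible, term to the trace. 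Since in the projective coordinates \eqref{left-coord} one has $H_{\R^b}(t,\sqrt{t}\,u)=\rho^{-b}H_{\R^b}(1,u)$ with $\rho=\sqrt{t}$, the operator $\mathscr{L}^{-1}(\mathscr{T}_{\A,\beta})$ lifts to a polyhomogeneous function on $[\R^+\times B^2,Y]$ of leading order $(-2-b+2\nu_\beta)$ at the front face, whose front face expansion carries the logarithmic factors $\log^{-|\A|-k}(t)$, $k\geq 0$.

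Next I would compose with $K$, which lifts to a polyhomogeneous function on $[\R^+\times B^2,Y]$ of front face order $(-2-b+\gamma)$. By Proposition \ref{G-composition} the convolution $\mathscr{L}^{-1}(\mathscr{T}_{\A,\beta})*_t K$ then lifts to a polyhomogeneous function of front face order $(-2-b+2\nu_\beta+\gamma)$. To see that the logarithmic factors persist with powers bounded above by $-|\A|$, I would split $\mathscr{L}^{-1}(\mathscr{T}_{\A,\beta})(t)$ into finitely many terms $t^{-1+\nu_\beta}\log^{-|\A|-k}(t)$ plus a remainder $O(t^{-1+\nu_\beta}\log^{-|\A|-N}(t))$ and run the scaling argument from the proof of Proposition \ref{G-composition} on each term: under $t\mapsto c^2t$ a factor $\log^{-N}(t)$ turns into $(2\log c+\log t)^{-N}$, and expanding this in powers of $1/\log t$ — the very manipulation carried out in the proof of Theorem \ref{K} — reproduces, after the $\wt$-integration, a series in $\log^{-N-l}(t)$, $l\geq 0$, with the same leading power as in the log-free case. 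Sandwiching with $H_{\nu_i}$ and $H_{\nu_j}$, which themselves are polyhomogeneous without logarithmic terms at their front faces, the ad verbatim extension of Proposition \ref{G-composition} recalled before the statement (compare \cite[Proposition 8.7]{Moo:THK}) shows that $H_{\nu_i}*\mathscr{L}^{-1}(\mathscr{T}_{\A,\beta})*K*H_{\nu_j}$ lifts to a polyhomogeneous distribution on the parabolic blowup of $\R^+\times(\partial M)^2$ around $Y_F$, of front face order $(-1-b-(\nu_i+\nu_j)+2\nu_\beta+\gamma)$, vanishing to infinite order at $\tf$ and $\td$, and still carrying front face logarithmic factors of power at most $-|\A|$.

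It then remains to take the trace, i.e. to restrict the kernel to the diagonal and push it forward along $\partial M$. By the standard pushforward argument underlying \eqref{trace} this produces $\textup{Tr}\big(H_{\nu_i}*\mathscr{L}^{-1}(\mathscr{T}_{\A,\beta})*K*H_{\nu_j}\big)(t)\sim\sqrt{t}^{\,\gamma-(\nu_i+\nu_j)+2\nu_\beta}\sum_{k,l\geq 0}d_{kl}\,\sqrt{t}^{\,k}\log^{-|\A|-l}(t)$ as $t\to 0$, which is the assertion. I expect the main obstacle to be the bookkeeping of the logarithmic powers: unlike pure powers of $t$, the factors $\log^{-N}(t)$ are not homogeneous under the parabolic rescaling, so the clean scaling identity of Proposition \ref{G-composition} must be replaced by an asymptotic version in which each rescaling generates a tail in powers of $1/\log t$; verifying that these tails only ever lower the logarithmic power, never raise it, and that they assemble into the stated double series is the technical core of the argument — but it is exactly the computation already performed in the proof of Theorem \ref{K}.
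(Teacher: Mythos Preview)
Your proposal is correct, but it is organized rather differently from the paper's own argument. The paper does not assemble the full polyhomogeneous structure of the composite kernel on the blowup and then push forward; instead it works directly at the level of traces. Since \eqref{trace} already gives the trace of $H_{\nu_i}*K*H_{\nu_j}$ as a pure power series in $\sqrt{t}$, and Theorem~\ref{K} gives $\mathscr{L}^{-1}(\mathscr{T}_{\A,\beta})$ as a series in $t^{-1+\nu_\beta}\log^{-|\A|-k}(t)$, the only remaining issue is how a time-convolution mixes a term $\tau^{\sigma}\log^{-\rho}(\tau)$ with a term $(t-\tau)^{\mu}$. The paper settles this with a single elementary identity: expand $(t-\tau)^\mu$ binomially, integrate $\int_0^t \tau^{\sigma+k}\log^{-\rho}(\tau)\,d\tau$ by parts once, and read off that the result is $t^{\sigma+\mu+1}\log^{-\rho}(t)+O(t^{\sigma+\mu+1}\log^{-\rho-1}(t))$. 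Iterating this identity over the successive convolutions yields the stated double series directly.

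Your route --- lift each factor to $[\R^+\times B^2,Y]$, extend the scaling argument of Proposition~\ref{G-composition} to accommodate factors $\log^{-N}(t)$ via $(2\log c+\log t)^{-N}=\sum_{l\ge 0}c_l\log^{-N-l}(t)$, and only then push forward --- reaches the same conclusion and has the virtue of producing the full kernel asymptotics rather than just the trace. The paper's approach, by contrast, is shorter and entirely elementary once \eqref{trace} and Theorem~\ref{K} are in hand: the bookkeeping of logarithmic powers that you flag as the main obstacle is dispatched by one integration by parts rather than by tracking index sets through a composition theorem.
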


\begin{proof}
The statement follows by an iterative application of the following identity
\begin{align*}
&\int_0^t \tau^\sigma \log^{-\rho}(\tau) (t-\tau)^\mu \, d\tau = 
\sum_{k=0}^\infty (-1)^k \, t^{\mu-k} \, \binom{\mu}{k} \int_0^t \tau^{\sigma + k}  \log^{-\rho}(\tau)  \, d\tau \\
&= \sum_{k=0}^\infty \frac{(-1)^k}{\sigma + k + 1} \binom{\mu}{k} \Big( t^{\sigma + \mu +1} \,  \log^{-\rho}(t) 
+ \rho \ t^{\mu-k} \, \int_0^t \tau^{\sigma + k}  \log^{-\rho-1}(\tau)  \, d\tau \Big)\\
&= \sum_{k=0}^\infty \frac{(-1)^k}{\sigma + k + 1} \binom{\mu}{k} \Big( t^{\sigma + \mu +1} \,  \log^{-\rho}(t) 
+O(t^{\sigma + \mu +1} \,  \log^{-\rho-1}(t)) \Big), \ t\to 0.
\end{align*}
\end{proof}

Combining Proposition \ref{LK} with Proposition \ref{GHT} 
we obtain a full asymptotic expansion for each $\textup{Tr} H_{\nu_i} *_t  (K^\infty_\Gamma)_{ij} *_t H_{\nu_j}$, 
complicated only by the intricate combination of the various components after matrix multiplications. The heat trace
asymptotic expansion admits logarithmic terms in accordance to \cite{KLP:EEA} and \cite{GKM:TEF}.
We make the leading orders explicit in our final main result.

\begin{thm}\label{main}
\begin{align*}
\textup{Tr} H_{\nu_i} *_t  (K^\infty_\Gamma)_{ij} *_t H_{\nu_j} \sim_{t\to 0}  \left\{
\begin{array}{ll}
O(\log^{-2}(t)), \ &\textup{if} \ \nu_i=\nu_j=0, i\neq j, \\
O(\sqrt{t}^{\ \nu_j}\log^{-1}(t)), \ &\textup{if} \ \nu_i=0, \nu_j\neq 0, \\
O(\log^{-1}(t)), \ &\textup{if} \ \nu_i=\nu_j=0, i=j, \\
O(\sqrt{t}^{\ \nu_i + \nu_j}), \ &\textup{if} \ \nu_i, \nu_j \neq 0, i\neq j, \\
O(\sqrt{t}^{\ 0}), \ &\textup{if} \ \nu_i=\nu_j\neq 0, i=j.
\end{array}\right.
\end{align*}
\end{thm}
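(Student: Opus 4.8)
The plan is to follow the leading-order behaviour of $\textup{Tr}\,H_{\nu_i}*_t(K^\infty_\Gamma)_{ij}*_tH_{\nu_j}$ through the chain Proposition~\ref{LK} $\rightarrow$ Theorem~\ref{K} $\rightarrow$ \eqref{trace} (equivalently the $K=\mathrm{Id}$ instance of Proposition~\ref{GHT}), treating the five positional cases in parallel. The first step is to isolate the leading term of the kernel $(K^\infty_\Gamma)_{ij}$. By the order count at the end of \S\ref{heat-section} one has $(K^\infty_\Gamma)_{ij}=\sum_{k\ge 0}\big((D\circ G)^k\circ D\big)_{ij}$, and every summand with $k\ge 1$ lifts to $[\R^+\times B^2,Y]$ with front-face order raised by at least $\A_{ij}\ge 1$ relative to $D_{ij}$; hence $(K^\infty_\Gamma)_{ij}$ agrees with $D_{ij}$ modulo kernels of strictly larger front-face order. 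Using \eqref{D} together with the identity $\mathscr F^{-1}\mathscr L^{-1}[\widetilde K(\zeta+|\w|^2)]=H_{\R^b}(t,\cdot)\,\mathscr L^{-1}\widetilde K(t)$ established in the proof of Proposition~\ref{model-F}, we may write, up to a harmless constant,
\begin{equation*}
D_{ij}(t,y,\wy)=H_{\R^b}(t,y-\wy)\;\mathscr L^{-1}\big[(\widetilde G_N^{-1})_{ij}\big](t).
\end{equation*}

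Next I would invoke Proposition~\ref{LK}: for $|\zeta|\gg 0$ the entry $(\widetilde G_N^{-1})_{ij}(\zeta)$ equals $A_{ij}\,\mathscr T_{\A^{(ij)},\beta^{(ij)}}(\zeta)$ times $\big(1+\sum_{|\A|,|\beta|\ge 1}B_{\A\beta}(\log\zeta+\kappa_\theta)^{-\A}\zeta^{-\beta}\big)$, where the leading multi-indices $(\A^{(ij)},\beta^{(ij)})$ are read off the five-case list. Writing $\nu_\beta=\sum_{k>q}\beta_k\nu_k$, the pair $(|\A^{(ij)}|,\nu_{\beta^{(ij)}})$ equals $(2,0)$ when $\nu_i=\nu_j=0$ and $i\ne j$; $(1,\nu_j)$ when $\nu_i=0,\ \nu_j\ne0$; $(1,0)$ when $\nu_i=\nu_j=0$ and $i=j$; $(0,\nu_i+\nu_j)$ when $\nu_i,\nu_j\ne0$ and $i\ne j$; and $(0,\nu_i)$ when $\nu_i=\nu_j\ne0$ and $i=j$. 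Applying Theorem~\ref{K} to the leading term gives $\mathscr L^{-1}\big[(\widetilde G_N^{-1})_{ij}\big](t)\sim A_{ij}\,t^{-1+\nu_{\beta^{(ij)}}}\log^{-|\A^{(ij)}|}(t)$ as $t\to0$, so $D_{ij}$ lifts to a polyhomogeneous kernel on $[\R^+\times B^2,Y]$ of front-face order $(-2-b+2\nu_{\beta^{(ij)}})$, carrying in addition a factor $\log^{-|\A^{(ij)}|}$ at the front face.

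Then I would pass to the trace. Feeding this $D_{ij}$ into \eqref{trace}, refined to carry the logarithmic twist exactly as in the proof of Proposition~\ref{GHT}, yields
\begin{equation*}
\textup{Tr}\big(H_{\nu_i}*_t D_{ij}*_t H_{\nu_j}\big)(t)\ \sim\ \sqrt{t}^{\,2\nu_{\beta^{(ij)}}-(\nu_i+\nu_j)}\;\log^{-|\A^{(ij)}|}(t),\qquad t\to0,
\end{equation*}
and substituting the five pairs above reproduces the table, since $2\nu_{\beta^{(ij)}}-(\nu_i+\nu_j)$ equals $0,\ \nu_j,\ 0,\ \nu_i+\nu_j,\ 0$ in the respective cases. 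It then remains to check that the discarded contributions are of strictly lower order: the $B_{\A\beta}$-corrections in Proposition~\ref{LK} and the $k\ge1$ summands of the Neumann series each land in the form $H_{\nu_i}*_t\mathscr L^{-1}(\mathscr T_{\A',\beta'})*_t K*_t H_{\nu_j}$ with either $\nu_{\beta'}>\nu_{\beta^{(ij)}}$ (a strictly larger power of $\sqrt{t}$) or $|\A'|>|\A^{(ij)}|$ (a strictly more negative power of $\log t$), so Proposition~\ref{GHT} bounds their traces strictly below the leading term. Summed over $(i,j)$ and combined with Theorem~\ref{thm-4-2} this also gives the full heat-trace expansion for $\Delta_\Gamma$.

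The step I expect to be the main obstacle is the bookkeeping rather than any single hard estimate. One must be certain that the matrix inversion $\widetilde G_N^{-1}=(\det\widetilde G_N)^{-1}\,\mathrm{adj}\,\widetilde G_N$ really produces the leading behaviour asserted in Proposition~\ref{LK} for each positional case --- in particular that the constant $A_{ij}$ is nonzero, so that the leading coefficient of the trace does not accidentally vanish --- and one must keep straight which corrections raise the $\sqrt{t}$-power and which merely deepen the $\log$-power, in order to be sure that the term extracted above is genuinely dominant. The cases $\nu_i,\nu_j\ne0$ (where $|\A^{(ij)}|=0$) additionally require observing that the inverse Laplace transform of a pure power carries no logarithm, so that there the leading term is the clean power $\sqrt{t}^{\,\nu_i+\nu_j}$, resp.\ $\sqrt{t}^{\,0}$, with no $\log$ correction, exactly as the statement requires.
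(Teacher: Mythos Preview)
Your proposal is correct and follows essentially the same route as the paper: the paper's ``proof'' consists of the single sentence ``Combining Proposition~\ref{LK} with Proposition~\ref{GHT} we obtain a full asymptotic expansion\ldots'', and you have carried out precisely this combination in detail, reading off the leading multi-indices $(|\A^{(ij)}|,\nu_{\beta^{(ij)}})$ from Proposition~\ref{LK} and pushing them through \eqref{trace} with the logarithmic refinement of Proposition~\ref{GHT}. One small imprecision: your parenthetical ``the $K=\mathrm{Id}$ instance of Proposition~\ref{GHT}'' is not quite how $D_{ij}$ fits into that proposition, since $D_{ij}(t,y,\wy)=H_{\R^b}(t,y-\wy)\cdot\mathscr{L}^{-1}[(\widetilde G_N^{-1})_{ij}](t)$ is a pointwise product rather than a time-convolution $\mathscr{L}^{-1}(\mathscr{T}_{\A,\beta})*K$; the clean way (which you in fact use later) is to treat $D_{ij}$ itself as the kernel $K$ in \eqref{trace} with $\gamma=2\nu_{\beta^{(ij)}}$ and carry the $\log^{-|\A^{(ij)}|}$ factor through via the integration identity in the proof of Proposition~\ref{GHT}.
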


This corresponds to (\cite{Moo:THK},Theorem 8.2), up to the case $\nu_i=\nu_j\neq 0, i=j$. 
Note for example that for $\nu_i, \nu_j \neq 0$ and $i\neq j$ the correcting kernel lies according to (\cite{Moo:THK}, Theorem 8.2) in
the space $\dot{\Phi}^{\nu_i+\nu_j+2}$, with the front face leading order $-3+ \nu_i+\nu_j+2=\nu_i+\nu_j-1$,
which yields the leading order $(\nu_i+\nu_j)/2$ after taking traces. Here, Theorem \ref{main} and (\cite{Moo:THK}, Theorem 8.2)
agree. \medskip

However, in case $i=j$, asymptotics of $\widetilde{G}_N^{-1}(\zeta)$ in Proposition \ref{LK}
is different and hence Theorem \ref{main} yields the leading order $0$, different from the expansion for $i\neq j$,
in contrast to (\cite{Moo:THK}, Theorem 8.2) which erroneously asserts a heat trace expansion of order $(\nu_i+\nu_j)$. 

\section*{Acknowledgements}
The author would like to thank Rafe Mazzeo for his continuous support and mathematical insight, 
as well as Thomas Krainer for helpful remarks. He is indebted to the anonymous referee for the careful reading of the manuscript and several important improvements. The author gratefully 
acknowledges the support by the Hausdorff Research Institute at
the University of Bonn.

\def\cprime{$'$}
\providecommand{\bysame}{\leavevmode\hbox to3em{\hrulefill}\thinspace}
\providecommand{\MR}{\relax\ifhmode\unskip\space\fi MR }
\providecommand{\MRhref}[2]{%
  \href{http://www.ams.org/mathscinet-getitem?mr=#1}{#2}
}
\providecommand{\href}[2]{#2}

\listoffigures


\begin{thebibliography}{ACM11}

\bibitem[\textsc{AbSt92}]{AbrSte:HOM}
\emph{Handbook of mathematical functions with formulas, graphs, and mathematical tables},
Edited by \textsc{M. Abramowitz} and \textsc{I. A. Stegun}, Reprint of the 1972 edition. 
Dover Publications, Inc., New York, 1992. xiv+1046 pp. ISBN: 0-486-61272-4 \MR{1225604 (94b:00012)}

\bibitem[\textsc{Alb07}]{Alb:RIT}
\textsc{P. Albin}, \emph{A renormalized index theorem for some complete asymptotically regular metrics: the Gauss Bonnet theorem}, 
Adv. Math. \textbf{213} (2007), no. 1, 1-52. \MR{2331237 (2008h:58043)}

\bibitem[BDV11]{BDV:THE}
E.~Bahuaud, E.~Dryden, and B.~Vertman, \emph{Mapping properties of the heat
  operator on edge manifolds}, preprint arXiv:1105.5119v1 [math.AP], accepted for publication at Math. Nachr. (2014)

\bibitem[BaVe11]{BahVer:YFO}
E.~Bahuaud and B.~Vertman, \emph{Yamabe flow on edge manifolds}, 
Math. Nachr. 287, No. 23, 127 - 159 (2014).

\bibitem[\textsc{BrSe87}]{BruSee:RES}
\textsc{J.~Br{\"u}ning} and \textsc{R.~Seeley}, \emph{The resolvent expansion
  for second order regular singular operators}, J. Funct. Anal. \textbf{73}
  (1987), no.~2, 369--429. \MR{899656 (88g:35151)}

\bibitem[\textsc{BrSe88}]{BruSee:ITF}
\bysame,  \emph{An index theorem for first order regular singular operators}, 
Amer. J. Math.  \textbf{110}  (1988), no. 4, 659--714. \MR{0955293 (89k:58271)}

\bibitem[\textsc{Che83}]{Che:SGS}
\textsc{J. Cheeger} , \emph{Spectral geometry of singular {R}iemannian spaces}, J.
  Differential Geom. \textbf{18} (1983), no.~4, 575--657 (1984). \MR{730920
  (85d:58083)}

\bibitem[\textsc{FMPS03}]{FMPS:UPZ}
\textsc{H.~Falomir}, \textsc{M.~A. Muschietti}, \textsc{P.~A.~G. Pisani}, and
  \textsc{R.~Seeley}, \emph{Unusual poles of the {$\zeta$}-functions for some
  regular singular differential operators}, J. Phys. A \textbf{36} (2003),
  no.~39, 9991--10010. \MR{2024508 (2004k:58049)}

\bibitem[\textsc{GKM10}]{GKM:TEF}
\textsc{J. B. Gil}, \textsc{T. Krainer}, \textsc{G. A. Mendoza}, 
\emph{Trace expansions for elliptic cone operators with stationary domains},
Trans. Amer. Math. Soc. \textbf{362} (2010), no. 12, 6495--6522. 
\MR{2678984 (2011h:58040)}

\bibitem[\textsc{GKM11}]{GKM1} \bysame, 
\emph{Dynamics on Grassmannians and resolvents of cone operators}, 
Anal. PDE \textbf{4} (2011), no. 1, 115--148. \MR{2783308 (2012d:58040)}


\bibitem[\textsc{KLP06}]{KLP:UPR}
\textsc{K.~Kirsten}, \textsc{P.~Loya}, and \textsc{J.~Park}, \emph{The very
  unusual properties of the resolvent, heat kernel, and zeta function for the
  operator {$-d\sp 2/dr\sp 2-1/(4r\sp 2)$}}, J. Math. Phys. \textbf{47} (2006),
  no.~4, 043506, 27. \MR{2226343 (2007c:58050)}

\bibitem[\textsc{KLP08}]{KLP:EEA}
\bysame,  \emph{Exotic expansions 
and pathological properties of $\zeta$-functions on conic manifolds}, with an Appendix by B. Vertman, 
J. Geom. Anal. \textbf{18} (2008), no. ~3, 835--888. \MR{2420767 (2009j:58051)} ?

\bibitem[\textsc{Les97}]{Les:OFT}
\textsc{M.~Lesch}, \emph{Operators of {F}uchs type, conical singularities, and
  asymptotic methods}, Teubner-Texte zur Mathematik [Teubner Texts in
  Mathematics], vol. 136, B. G. Teubner Verlagsgesellschaft mbH, Stuttgart,
  1997. \MR{1449639 (98d:58174)}

\bibitem[\textsc{LeVe11}]{LesVer:RSS}
\textsc{M. Lesch}, \textsc{B. Vertman}, \emph{Regular singular Sturm-Liouville 
operators and their zeta-determinants}, J. Funct. Anal. \textbf{261} (2011), no. 2, 408--450. 
\MR{2793118 (2012g:58064)} 

\bibitem[\textsc{Maz91}]{Maz:ETD}
\textsc{R. Mazzeo}, \emph{Elliptic theory of differential edge operators. I.} Comm. Partial Differential Equations  \textbf{16}  (1991), no. 10, 1615--1664. \MR{1133743 (93d:58152)}

\bibitem[\textsc{MaVe12}]{MazVer:ATM}
\textsc{R. Mazzeo} and \textsc{B. Vertman}, \emph{Analytic torsion on manifolds with edges}, Adv. Math. \textbf{231} (2012), no. 2, 1000--1040.

\bibitem[\textsc{Mel92}]{Mel:COC}
\textsc{R. Melrose}, \emph{Calculus of conormal distributions on manifolds with corners} Intl. Math. Research Notices, No. 3  (1992), 51-61.

\bibitem[\textsc{Mel93}]{Mel:TAP}
\bysame,  \emph{The Atiyah-Patodi-Singer index theorem} Research Notes in Math., Vol. 4, A K Peters, Massachusetts (1993)


\bibitem[\textsc{Moo96}]{Moo:THK}
\textsc{E. Mooers}, \emph{The Heat Kernel for Manifolds with Conic Singularities}, PhD thesis at MIT, 1996. Available at \\
\texttt{http://dspace.mit.edu/bitstream/handle/1721.1/38406/36023220.pdf}

\bibitem[\textsc{Moo99}]{Moo:HKA}
\bysame, \emph{Heat kernel asymptotics on manifolds with conic
  singularities}, J. Anal. Math. \textbf{78} (1999), 1--36. \MR{1714065
  (2000g:58039)}
  
\bibitem[\textsc{Sch91}]{Sch:PDO} 
\textsc{B. -W. Schulze}, \emph{Pseudo-differential operators on manifolds with singularities}, North-Holland, Amsterdam (1991)

\bibitem[\textsc{Ver09}]{Ver:ZDF}
\textsc{B. Vertman}, \emph{Zeta determinants for regular-singular Laplace-type operators},
J. Math. Phys. \textbf{50} (2009), no. 8, 083515, 23 pp. \MR{2554443 (2010i:58035)} 

\bibitem[\textsc{Ver12}]{Ver:TEH}
\bysame, \emph{The exotic heat-trace asymptotics of a regular-singular operator revisited},
J. Math. Phys. \textbf{54}, (2013), 063501, no. 6, 11pp. \MR{3112543}

\end{thebibliography}
\end{document}